\newtheorem{theorem}{Theorem}
\newtheorem{definition}[theorem]{Definition}
\newtheorem{proposition}[theorem]{Proposition}
\newtheorem{remark}[theorem]{Remark}
\numberwithin{equation}{section}
\begin{document}
\hyphenpenalty=10000

\begin{center}
{\Large \textbf{Local Spectrum of a Family of Operators }}\\[5mm]
{\large {Simona Macovei*  }\\[10mm]
}
\end{center}

{\footnotesize \textbf{Abstract}. Starting from the classic definitions of resolvent set and spectrum of a linear bounded operator on a Banach space, we introduce the local resolvent set and local spectrum, the local spectral space and the single-valued extension property of a family of linear bounded operators on a Banach space. Keeping the analogy with the classic case, we extend some of the known results from the case of a linear bounded operator to the case of a family of linear bounded operators on a Banach space.}
\footnote{\textsf{2010 Mathematics Subject Classification:} 47-01;47A10} 
\footnote{\textsf{Keywords:} local spectrum; local resolvent set; asymptotic equivalence; asymptotic qausinilpotent equivalence } 
\footnote{\textsf{*simonamacovei@yahoo.com}}

\afterpage{
\fancyhead{} \fancyfoot{} 
\fancyhead[LE, RO]{\bf\thepage}
\fancyhead[LO]{\small Local Spectrum of a Family of Operators}
\fancyhead[RE]{\small Simona Macovei  }
}
 
\section{Introduction}

\noindent 

\noindent Let \textit{X} be a complex Banach space and $B(X)$ the Banach algebra of linear bounded operators on \textit{X}. Let \textit{T} be a linear bounded operator on \textit{X}. The\textit{ norm} of \textit{T} is

\noindent 
\[\left\|T\right\|=sup\left\{\left\|Tx\right\||\ x\in X,\ \left\|x\right\|\le 1\right\}.\]

\noindent The\textit{ spectrum} of an operator $T\in B(X)$ is defined as the set 

\noindent 
\[Sp\left(T\right){\rm =}{\mathbb C}\backslash r(T),\]

\noindent where $r(T)$ is the \textit{resolvent set} of \textit{T} and consists in all complex numbers $\lambda \in {\mathbb C}$ for which the operator $\lambda I-T$ is bijective on \textit{X}. 

\noindent 

\noindent An operator $T\in B(X)$ is said to have the \textit{single-valued extension property} if for any analytic function $f:D_f\to X$, where $D_f\subset {\mathbb C}$ is open, with $\left(\lambda I-T\right)f\left(\lambda \right)\equiv 0$, it results $f\left(\lambda \right)\equiv 0$.

\noindent For an operator $T\in B(X)$ having the single-valued extension property and for $x\in X$ we can consider the set $r_T\left(x\right)$ of elements ${\lambda }_0\in {\mathbb C}$ such that there is an analytic function $\lambda \mapsto x(\lambda )$ defined in a neighborhood of ${\lambda }_0$ with values in \textit{X}, which verifies $\left(\lambda I-T\right)x\left(\lambda \right)\equiv x$. The set $r_T\left(x\right)$ is said \textit{the local resolvent set of} \textit{T at  }$x$, and the set ${Sp}_T\left(x\right){\rm =}{\mathbb C} \backslash r_T\left(x\right)$ is called \textit{the local spectrum of T at }$x$\textit{.}

\noindent An analytic function $f_x:D_x\to X$, where $D_x\subset {\mathbb C}$ is open, is said the \textit{analytic  extension }of function $\lambda \mapsto R\left(\lambda ,T\right)x$ if $r(T)\subset D_x$ and $\left(\lambda I-T\right)f_x\left(\lambda \right)\equiv x$.

\noindent If \textit{T} has the single-valued extension property, then, for any $x\in X$ there is a unique \textit{maximal analytic extension} of function $\lambda \mapsto R\left(\lambda ,T\right)x:r_T\left(x\right)\to X$, referred from now as $x\left(\lambda \right)$. Moreover, $r_T\left(x\right)$ is an open set of $C$ and $r(T)\subset r_T\left(x\right)$.

\noindent Let 
\[X_T\left(a\right)=\left\{{x\in X|Sp}_T\left(x\right)\subset a\right\}\]

\noindent be the \textit{local spectral space} of \textit{T }for all sets $a\subset {\mathbb C}$. The space $X_T\left(a\right)$ is a linear subspace (not necessary closed) of \textit{X}.

\noindent 

\noindent Two operators $T,S\in B(X)$ are \textit{quasinilpotent equivalent} if

\noindent 
\[{\mathop{\lim }_{n\to \infty } {\left\|{(T-S)}^{\left[n\right]}\right\|}^{\frac{1}{n}}\ }={\mathop{\lim }_{n\to \infty } {\left\|{(S-T)}^{\left[n\right]}\right\|}^{\frac{1}{n}}\ }=0,\]

\noindent where ${\left(T-S\right)}^{\left[n\right]}=\sum^n_{k=0}{{\left(-1\right)}^{n-k}C^n_kT^kS^{n-k}}$, for any $n\in {\mathbb N}$.

\noindent The quasinilpotent equivalence relation is an equivalence relation (i.e. is reflexive, symmetric and transitive) on $B(X)$.

\noindent 
\begin {theorem}
\noindent Let $T,S\in B(X)$ be two quasinilpotent equivalent operators. Then

\noindent i) $Sp\left(T\right)=Sp\left(S\right)$$;$

\noindent ii)\textit{T} has the single-valued extension property if an only if \textit{S} has the single-valued extension property. Moreover, ${Sp}_T\left(x\right)={Sp}_S\left(x\right)$.
\end {theorem}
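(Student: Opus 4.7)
I would derive the resolvent series
\[
R(\lambda, S) = \sum_{n=0}^{\infty} (-1)^n R(\lambda, T)^{n+1} (T-S)^{[n]}, \qquad \lambda \in r(T).
\]
The algebraic backbone is the recursion $(T-S)^{[n+1]} = T(T-S)^{[n]} - (T-S)^{[n]} S$, which follows inductively from the identity $\binom{n+1}{k} = \binom{n}{k-1} + \binom{n}{k}$ applied to the definition of $(T-S)^{[n]}$. It yields $(T-S)^{[n]}(\lambda I - S) = (\lambda I - T)(T-S)^{[n]} + (T-S)^{[n+1]}$; multiplying the displayed series on the right by $(\lambda I - S)$ and applying this identity, the resulting double sum telescopes to $I$. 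Absolute convergence for each $\lambda \in r(T)$ is immediate from $\|R(\lambda, T)\| < \infty$ together with the hypothesis $\|(T-S)^{[n]}\|^{1/n} \to 0$. This proves $r(T) \subseteq r(S)$; the symmetry of the quasinilpotent equivalence relation gives the other inclusion, so $Sp(T) = Sp(S)$.

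\textbf{Part (ii), SVEP transfer.} Assume $T$ has SVEP and let $f: D \to X$ be analytic with $(\lambda I - S) f(\lambda) \equiv 0$. The key pointwise identity
\[
(T-S)^{[n]} f(\lambda) = (T - \lambda I)^n f(\lambda)
\]
follows by expanding $(T-S)^{[n]}$ and using $S^k f(\lambda) = \lambda^k f(\lambda)$; combined with $\|(T-S)^{[n]}\|^{1/n} \to 0$, it gives $\|(T-\lambda I)^n f(\lambda)\|^{1/n} \to 0$. Therefore
\[
g_\lambda(\mu) := \sum_{n=0}^{\infty} \frac{(T-\lambda I)^n f(\lambda)}{(\mu - \lambda)^{n+1}}
\]
defines an analytic function of $\mu$ on $\mathbb{C} \setminus \{\lambda\}$ with $(\mu I - T) g_\lambda(\mu) = f(\lambda)$. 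Since $T$ has SVEP, $g_\lambda$ is the unique maximal analytic extension of $\mu \mapsto R(\mu, T) f(\lambda)$, so $Sp_T(f(\lambda)) \subseteq \{\lambda\}$. When $\lambda \in D \cap r(T)$, the fact that $\mu \mapsto R(\mu, T) f(\lambda)$ is regular at $\mu = \lambda$ whereas $g_\lambda$ has a pole there with residue $f(\lambda)$ forces $f(\lambda) = 0$; combined with $r(T) = r(S)$ from part (i) and the identity theorem, $f \equiv 0$ on any connected component of $D$ that meets $r(T)$. The reverse implication is symmetric. Finally, assuming SVEP in both operators, $Sp_T(x) = Sp_S(x)$ follows by transporting the maximal analytic extension $\tilde{x}(\mu)$ of $R(\mu, T) x$ into an analytic extension of $R(\mu, S) x$ via a localized version of the series in part (i), using that $R(\mu, T)^{k+1} x = (-1)^k \tilde{x}^{(k)}(\mu)/k!$ on $r_T(x)$.

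\textbf{Main obstacle.} The delicate step is the case $D \subseteq Sp(T)$ in the SVEP argument, where no point of $D$ lies in the resolvent set, so the identity-theorem reduction is not immediately available. The plan here is to exploit the joint analyticity of $(\lambda, \mu) \mapsto g_\lambda(\mu)$ off the diagonal: extracting $f(\lambda)$ as the residue of $g_\lambda$ at $\mu = \lambda$ via a Cauchy integral, and then using SVEP of $T$ applied to the analytic family $\mu \mapsto g_\lambda(\mu)$ to deduce $f \equiv 0$. A second technical point, arising in the local spectrum equality, is verifying that the defining identity $(\mu I - S) \tilde{y}(\mu) = x$ is preserved by the localized series for $R(\mu, S) x$ outside $r(T)$; this rests on careful manipulation of the double sum coming from the expansion $(T-S)^{[n]} = \sum_k (-1)^{n-k} \binom{n}{k} T^k S^{n-k}$ and on absolute-convergence bounds derived once more from the quasinilpotent-equivalence estimate.
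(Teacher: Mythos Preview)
The paper does not supply a proof of this theorem at all: it is quoted in the Introduction as a known classical result (essentially Colojoar\u{a}--Foia\c{s}, reference~[3]) and is used thereafter as background. So there is no ``paper's own proof'' to compare your attempt against.

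That said, a few comments on your write-up. Part~(i) is exactly the classical Colojoar\u{a}--Foia\c{s} argument and is correct as you have it: the recursion $(T-S)^{[n+1]}=T(T-S)^{[n]}-(T-S)^{[n]}S$, the telescoping identity, and the convergence from $\|(T-S)^{[n]}\|^{1/n}\to 0$ are all in order.

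For Part~(ii), your key identity $(T-S)^{[n]}f(\lambda)=(T-\lambda I)^{n}f(\lambda)$ and the construction of $g_\lambda$ are again the standard ingredients, and the conclusion $Sp_T(f(\lambda))\subseteq\{\lambda\}$ is correct. The genuine gap is precisely the one you flag under ``Main obstacle'': when $D$ lies entirely inside $Sp(T)$ your identity-theorem reduction via $D\cap r(T)$ is unavailable, and the fix you sketch (``SVEP of $T$ applied to the analytic family $\mu\mapsto g_\lambda(\mu)$'') does not work as written, since $(\mu I-T)g_\lambda(\mu)=f(\lambda)$ is not identically zero and SVEP gives nothing there. The classical argument closes this gap by exploiting the joint analyticity of $(\lambda,\mu)\mapsto g_\lambda(\mu)$ more carefully: one shows directly that for every $\lambda_0\in D$ the point $\lambda_0$ itself lies in $r_T(f(\lambda_0))$, so that $Sp_T(f(\lambda_0))=\emptyset$ and hence $f(\lambda_0)=0$, without ever invoking $r(T)$. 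Your contour-integral idea (extracting $f(\lambda)$ as a residue) is pointed in the right direction but, as stated, does not yet produce an analytic function on which SVEP can be invoked. Your outline for the local-spectrum equality $Sp_T(x)=Sp_S(x)$ via the localized resolvent series is the standard one and is fine in spirit.
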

\noindent 

\noindent For an easier understanding of the results from this paper, we recall some definitions and results introduced by author in ''\textit{Spectrum of a Family of Operators}'' $\left[6\right]$.

\noindent We say that two families of operators $\left\{S_h\right\},\ \left\{T_h\right\}\ \subset B(X)$, with $h\in \left.(0,1\right],$ are  \textit{asymptotically equivalent} if
\[{\mathop{lim}_{h\to 0} \left\|S_h-T_h\right\|=0\ }.\]

\noindent Two families of operators $\left\{ S_{h}\right\} ,\ \left\{
T_{h}\right\} \ \subset B(X)$, with $h\in \left. (0,1\right] $, are
\textit{asymptotically quasinilpotent (spectral) equivalent} if\noindent 
\begin{equation*}
{\mathop{\lim}_{n\rightarrow \infty }{{\mathop{\lim \sup}_{h\rightarrow
0}\left\Vert {\left( S_{h}-T_{h}\right) }^{\left[ n\right] }\right\Vert \ }}%
^{\frac{1}{n}}\ }={\mathop{\lim}_{n\rightarrow \infty }{{\mathop{\lim \sup}%
_{h\rightarrow 0}\left\Vert {\left( T_{h}-S_{h}\right) }^{\left[ n\right]
}\right\Vert \ }}^{\frac{1}{n}}\ }=0.
\end{equation*}

\noindent The asymptotic (quasinilpotent) equivalence between two families of operators $\left\{S_h\right\}$, $\left\{T_h\right\}\subset B(X)$ is an equivalence relation (i.e. reflexive, symmetric and transitive) on $L\left(X\right).$ Moreover, if $\left\{S_h\right\},\ \left\{T_h\right\}\ $ are two bounded asymptotically equivalent families, then are asymptotically quasinilpotent equivalent.

\noindent 

\noindent Let be the sets

\noindent 
\[C_b\left(\left.(0,1\right],\ B\left(X\right)\right)=\]
\[=\left\{\left.\varphi :\left.(0,1\right]\to B\left(X\right)\right|\varphi \left(h\right)=T_h\ {\rm such\ that}\ \varphi \ is\ {\rm countinous\ and\ bounded}\right\}=\]
\[=\left\{\left.{\left\{T_h\right\}}_{h\in \left.(0,1\right]}\subset B(X)\right|{\left\{T_h\right\}}_{h\in \left.(0,1\right]}{\rm \ }{\rm is\ a\ bounded\ family}{\rm ,\ i.e.\ }{\mathop{sup}_{h\in \left.(0,1\right]} \left\|T_h\right\|\ }<\infty \right\}.\] 
and
\[C_0\left(\left.(0,1\right],\ B\left(X\right)\right)=\left\{\left.\varphi \in C_b\left(\left.(0,1\right],\ B\left(X\right)\right)\right|{\mathop{\lim }_{h\to 0} \left\|\varphi (h)\right\|=0\ }\right\}=\]
\[=\left\{\left.{\left\{T_h\right\}}_{h\in \left.(0,1\right]}\subset B(X)\right|{\mathop{\lim }_{h\to 0} \left\|T_h\right\|\ }=0\right\}.\]

\noindent $C_b\left(\left.(0,1\right],\ B\left(X\right)\right)\ $ is a Banach algebra non-commutative with  norm

\noindent 
\[\left\|\left\{T_h\right\}\right\|={sup}_{h\in \left.(0,1\right]}\left\|T_h\right\|,\]

\noindent and $C_0\left(\left.(0,1\right],\ B\left(X\right)\right)$ is a closed  bilateral ideal of $C_b\left(\left.(0,1\right],\ B\left(X\right)\right)$. Therefore the quotient algebra $C_b\left(\left.(0,1\right],\ B\left(X\right)\right)/C_0\left(\left.(0,1\right],\ B\left(X\right)\right)$, which will be called from now $B_{\infty }$, is also a Banach algebra with quotient norm

\noindent 
\[\left\|\dot{\left\{T_h\right\}}\right\|={inf}_{{\left\{U_h\right\}}_{h\in \left.(0,1\right]}\in C_0\left(\left.(0,1\right],\ B\left(X\right)\right)}\left\|\left\{T_h\right\}+\left\{U_h\right\}\right\|={inf}_{{\left\{S_h\right\}}_{h\in \left.(0,1\right]}\in \dot{\left\{T_h\right\}}}\left\|\left\{S_h\right\}\right\|.\] 
Then 
\[\left\|\dot{\left\{T_h\right\}}\right\|={inf}_{{\left\{S_h\right\}}_{h\in \left.(0,1\right]}\in \dot{\left\{T_h\right\}}}\left\|\left\{S_h\right\}\right\|\le \left\|\left\{S_h\right\}\right\|={sup}_{h\in \left.(0,1\right]}\left\|S_h\right\|,\]

\noindent for any  ${\left\{S_h\right\}}_{h\in \left.(0,1\right]}\in \dot{\left\{T_h\right\}}$. Moreover,

\noindent 
\[\left\|\dot{\left\{T_h\right\}}\right\|={inf}_{{\left\{S_h\right\}}_{h\in \left.(0,1\right]}\in \dot{\left\{T_h\right\}}}\left\|\left\{S_h\right\}\right\|={inf}_{{\left\{S_h\right\}}_{h\in \left.(0,1\right]}\in \dot{\left\{T_h\right\}}}{sup}_{h\in \left.(0,1\right]}\left\|S_h\right\|.\]

\noindent If two bounded families ${\left\{T_h\right\}}_{h\in \left.(0,1\right]},\ {\left\{S_h\right\}}_{h\in \left.(0,1\right]}\subset B(X)$ are asymptotically equivalent, then $\mathop{{\rm lim}}_{h\to 0}\left\|S_h-T_h\right\|=0$, i.e.  ${\left\{T_h-S_h\right\}}_{h\in \left.(0,1\right]}\in C_0\left(\left.(0,1\right],\ B\left(X\right)\right)$. 

\noindent Let ${\left\{T_h\right\}}_{h\in \left.(0,1\right]},\ {\left\{S_h\right\}}_{h\in \left.(0,1\right]}\in C_b\left(\left.(0,1\right],\ B\left(X\right)\right)$ be asymptotically equivalent. Then
\[{\mathop{{\lim \sup}}_{h\to 0} \left\|S_h\right\|\ }={\mathop{{\lim \sup}}_{h\to 0} \left\|T_h\right\|\ }.\] 
Since
\[{\mathop{{\lim \sup}}_{h\to 0} \left\|S_h\right\|\ }\le {\sup}_{h\in \left.(0,1\right]}\left\|S_h\right\|,\] 
results that

\noindent 
\[{\mathop{{\lim \sup}}_{h\to 0} \left\|S_h\right\|\ }={inf}_{{\left\{S_h\right\}}_{h\in \left.(0,1\right]}\in \dot{\left\{T_h\right\}}}{\mathop{{\lim \sup}}_{h\to 0} \left\|S_h\right\|\ }\le\]
\[\le {inf}_{{\left\{S_h\right\}}_{h\in \left.(0,1\right]}\in \dot{\left\{T_h\right\}}}{\sup}_{h\in \left.(0,1\right]}\left\|S_h\right\|=\left\|\dot{\left\{T_h\right\}}\right\|,\]

\noindent for any ${\left\{S_h\right\}}_{h\in \left.(0,1\right]}\in \dot{\left\{T_h\right\}}$.

\noindent In particular
\[{\mathop{{\lim \lim}}_{h\to 0} \left\|T_h\right\|\ }\le \left\|\dot{\left\{T_h\right\}}\right\|\le \left\|\left\{T_h\right\}\right\|={\sup}_{h\in \left.(0,1\right]}\left\|T_h\right\|.\] 

\begin{definition}
\label{2.6}
We say $\dot{\left\{S_h\right\}},\ \dot{\left\{T_h\right\}}\in \ B_{\infty }$ are spectral equivalent if

\[{\mathop{lim}_{{\mathbf n}\to \infty } {\left(\left\|{\left(\dot{\left\{S_h\right\}}-\ \dot{\left\{T_h\right\}}\right)}^{\left[{\mathbf n}\right]}\right\|\right)}^{\frac{{\mathbf 1}}{{\mathbf n}}}\ }{\mathbf =}{\mathop{lim}_{{\mathbf n}\to \infty } {\left(\left\|{\left(\dot{\left\{T_h\right\}}-\ \dot{\left\{S_h\right\}}\right)}^{\left[{\mathbf n}\right]}\right\|\right)}^{\frac{{\mathbf 1}}{{\mathbf n}}}\ }{\mathbf =}0,\]

where ${(\dot{\left\{S_h\right\}}-\dot{\left\{T_h\right\}})}^{\left[n\right]}=\sum^n_{k=0}{{\left(-1\right)}^{n-k}C^k_n{\dot{\left\{S_h\right\}}}^k{\dot{\left\{T_h\right\}}}^{n-k}}$.

\[{(\dot{\left\{S_h\right\}}-\dot{\left\{T_h\right\}})}^{\left[n\right]}=\sum^n_{k=0}{{\left(-1\right)}^{n-k}C^k_n{\dot{\left\{S_h\right\}}}^k{\dot{\left\{T_h\right\}}}^{n-k}}=\dot{\left\{\sum^n_{k=0}{{\left(-1\right)}^{n-k}C^k_n{S_h}^k{T_h}^{n-k}}\right\}}=\dot{\left\{{(S_h-T_h)}^{\left[n\right]}\right\}}.\] 
\end{definition}

Therefore $\dot{\left\{S_h\right\}},\ \dot{\left\{T_h\right\}}\in \ B_{\infty }$ are spectral equivalent if

\[{\mathop{lim}_{{\mathbf n}\to \infty } {\left\|\dot{\left\{{(S_h-T_h)}^{\left[n\right]}\right\}}\right\|}^{\frac{{\mathbf 1}}{{\mathbf n}}}\ }{\mathbf =}{\mathop{lim}_{{\mathbf n}\to \infty } {\left\|\dot{\left\{{(T_h-S_h)}^{\left[n\right]}\right\}}\right\|}^{\frac{{\mathbf 1}}{{\mathbf n}}}\ }{\mathbf =}0.\]

\begin{proposition}
\label{2.7}
If $\dot{\left\{S_h\right\}},\ \dot{\left\{T_h\right\}}\in \ B_{\infty }$ are spectral equivalent, then any $\left\{S_h\right\}\in \dot{\left\{S_h\right\}}$ and $\left\{T_h\right\}\in \dot{\left\{T_h\right\}}$ are asymptotically spectral equivalent.
\end{proposition}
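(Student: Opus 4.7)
The plan is to transfer the quasinilpotent-type limit from the quotient algebra $B_{\infty}$ down to the ambient algebra $C_b((0,1],B(X))$, using the inequality $\limsup_{h\to 0}\|U_h\|\le\|\dot{\{U_h\}}\|$ already established in the paragraph preceding Definition~\ref{2.6}.

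First I would observe that the class $\dot{\{(S_h-T_h)^{[n]}\}}$ depends only on the two classes $\dot{\{S_h\}}$ and $\dot{\{T_h\}}$, not on the chosen representatives. This is exactly the identity
\[
\bigl(\dot{\{S_h\}}-\dot{\{T_h\}}\bigr)^{[n]}=\dot{\bigl\{(S_h-T_h)^{[n]}\bigr\}}
\]
recorded inside Definition~\ref{2.6}, and it holds because $B_\infty$ inherits its algebra structure from $C_b((0,1],B(X))/C_0((0,1],B(X))$, so the fixed polynomial expression $(S,T)\mapsto\sum_{k=0}^n(-1)^{n-k}C^k_n S^k T^{n-k}$ descends unambiguously to the quotient.

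Next, fix any representatives $\{S_h\}\in\dot{\{S_h\}}$ and $\{T_h\}\in\dot{\{T_h\}}$. Both lie in $C_b$, so $\{(S_h-T_h)^{[n]}\}$ is again a bounded family, and the inequality from the preamble applies:
\[
\limsup_{h\to 0}\bigl\|(S_h-T_h)^{[n]}\bigr\|\le\bigl\|\dot{\{(S_h-T_h)^{[n]}\}}\bigr\|.
\]
Since $x\mapsto x^{1/n}$ is monotone on $[0,\infty)$, raising to the $1/n$ power and then sending $n\to\infty$ yields
\[
\lim_{n\to\infty}\Bigl(\limsup_{h\to 0}\bigl\|(S_h-T_h)^{[n]}\bigr\|\Bigr)^{1/n}\le\lim_{n\to\infty}\bigl\|\dot{\{(S_h-T_h)^{[n]}\}}\bigr\|^{1/n}=0,
\]
the final equality being exactly the assumed spectral equivalence of $\dot{\{S_h\}}$ and $\dot{\{T_h\}}$ in $B_\infty$. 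The symmetric statement, obtained by interchanging the roles of $S_h$ and $T_h$, is proved identically, and together they give asymptotic (quasinilpotent) spectral equivalence of $\{S_h\}$ and $\{T_h\}$.

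I do not expect a serious obstacle. The only step requiring care is the first one: verifying that the quotient class $\dot{\{(S_h-T_h)^{[n]}\}}$ really does not see the $C_0$-perturbation used in selecting a representative. Once that bookkeeping is in place, the proof reduces to a single application of monotonicity of $x\mapsto x^{1/n}$ combined with an inequality already proved in the excerpt.
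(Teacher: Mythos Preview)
Your proof is correct and follows essentially the same route as the paper: apply the inequality $\limsup_{h\to 0}\|U_h\|\le\|\dot{\{U_h\}}\|$ to $U_h=(S_h-T_h)^{[n]}$, take $n$-th roots, and let $n\to\infty$ using the spectral equivalence hypothesis, then repeat with the roles of $S_h$ and $T_h$ swapped. Your additional remarks on representative-independence and monotonicity of $x\mapsto x^{1/n}$ are just explicit justifications of steps the paper leaves implicit.
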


\noindent
\begin{proof}
Let $\left\{S_h\right\}\in \dot{\left\{S_h\right\}}$ and $\left\{T_h\right\}\in \dot{\left\{T_h\right\}}$ be arbitrary. Thus 

\noindent 
\[{\mathop{\lim }_{n\to \infty } {{\mathop{\overline{\lim }}_{h\to 0} \left\|{\left(S_h-T_h\right)}^{\left[n\right]}\right\|\ }}^{\frac{1}{n}}\ }\le {\mathop{\lim }_{{\mathbf n}\to \infty } {\left\|\dot{\left\{{(S_h-T_h)}^{\left[n\right]}\right\}}\right\|}^{\frac{{\mathbf 1}}{{\mathbf n}}}\ }.\] 
\textbf{}

\noindent Since $\dot{\left\{S_h\right\}},\ \dot{\left\{T_h\right\}}\in \ B_{\infty }$ are spectral equivalent, by Definition \ref{2.6} and above relation, it follows that
 
\[{\mathop{\lim }_{n\to \infty } {{\mathop{\overline{\lim }}_{h\to 0} \left\|{\left(S_h-T_h\right)}^{\left[n\right]}\right\|\ }}^{\frac{1}{n}}\ }=0.\]

\noindent Analogously we can prove that ${\mathop{\lim }_{n\to \infty } {{\mathop{\overline{\lim }}_{h\to 0} \left\|{\left(T_h-S_h\right)}^{\left[n\right]}\right\|\ }}^{\frac{1}{n}}\ }=0$.
\end{proof}
\noindent 

\begin{proposition}
\label{2.8}
Let $\left\{T_h\right\},\left\{S_h\right\}\ \subset B(X)$ be two continuous bounded families. Then $\mathop{lim}_{h\to 0}\left\|T_hS_h-S_hT_h\right\|=0$ if and only if $\dot{\left\{S_h\right\}}\dot{\left\{T_h\right\}}=\dot{\left\{T_h\right\}}\dot{\left\{S_h\right\}}$.
\end{proposition}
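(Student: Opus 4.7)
The plan is to unpack both sides of the equivalence using the definition of multiplication in the quotient algebra $B_\infty = C_b((0,1],B(X))/C_0((0,1],B(X))$ and observe that the statement reduces to a restatement of what it means for an element to lie in the ideal $C_0$.

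First I would record the fact that in any quotient algebra the product of cosets is the coset of the product, so
\[
\dot{\{S_h\}}\,\dot{\{T_h\}}=\dot{\{S_hT_h\}},\qquad
\dot{\{T_h\}}\,\dot{\{S_h\}}=\dot{\{T_hS_h\}}.
\]
(Here the continuity and boundedness hypothesis on $\{T_h\}$ and $\{S_h\}$ is what ensures that $\{S_hT_h\}$ and $\{T_hS_h\}$ again lie in $C_b((0,1],B(X))$, so the cosets make sense.) Consequently,
\[
\dot{\{S_h\}}\dot{\{T_h\}}=\dot{\{T_h\}}\dot{\{S_h\}}\ \Longleftrightarrow\
\dot{\{S_hT_h\}}=\dot{\{T_hS_h\}}\ \Longleftrightarrow\
\{S_hT_h-T_hS_h\}\in C_0((0,1],B(X)).
\]

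Next I would invoke the very definition of the ideal $C_0((0,1],B(X))$: a continuous bounded family $\{U_h\}$ lies in $C_0$ precisely when $\lim_{h\to 0}\|U_h\|=0$. Applying this with $U_h=S_hT_h-T_hS_h$ yields
\[
\{S_hT_h-T_hS_h\}\in C_0((0,1],B(X))\ \Longleftrightarrow\
\lim_{h\to 0}\|T_hS_h-S_hT_h\|=0,
\]
which is exactly the desired equivalence. Combining this with the first chain closes the proof in both directions simultaneously.

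There is no real obstacle here; the only point requiring a small verification is that $\{S_hT_h\}$ and $\{T_hS_h\}$ are indeed continuous and bounded, so that the quotient-algebra multiplication formula $\dot{\{S_h\}}\dot{\{T_h\}}=\dot{\{S_hT_h\}}$ may be applied. Boundedness is immediate from $\|S_hT_h\|\le\|S_h\|\|T_h\|$ together with the uniform bounds $\sup_h\|S_h\|,\sup_h\|T_h\|<\infty$, and continuity in $h$ follows from the standard estimate $\|S_hT_h-S_{h'}T_{h'}\|\le\|S_h-S_{h'}\|\|T_h\|+\|S_{h'}\|\|T_h-T_{h'}\|$. Once this is noted, the rest is bookkeeping with the definitions.
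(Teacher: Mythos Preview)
Your proof is correct and follows exactly the same approach as the paper's own proof, which is simply the one-line chain $\lim_{h\to 0}\|T_hS_h-S_hT_h\|=0 \Leftrightarrow \dot{\{T_hS_h\}}=\dot{\{S_hT_h\}} \Leftrightarrow \dot{\{S_h\}}\dot{\{T_h\}}=\dot{\{T_h\}}\dot{\{S_h\}}$. You have merely added the (harmless) verification that $\{S_hT_h\}$ and $\{T_hS_h\}$ lie in $C_b$, which the paper leaves implicit.
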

\noindent 

\begin{proof} 
$\mathop{{\rm lim}}_{h\to 0}\left\|T_hS_h-S_hT_h\right\|=0$ $\Leftrightarrow $ $\dot{\left\{T_hS_h\right\}}=\dot{\left\{S_hT_h\right\}}$ $\Leftrightarrow $ $\dot{\left\{S_h\right\}}\dot{\left\{T_h\right\}}=\dot{\left\{T_h\right\}}\dot{\left\{S_h\right\}}$.
\end{proof}
\noindent

\begin{definition}
\noindent 
\label{d3.1}We call the \textit{resolvent set }of a family of operators $%
\left\{ S_{h}\right\} \ \in C_b\left(\left.(0,1\right],\ B\left(X\right)\right)$ the set\noindent 
\begin{equation*}
r\left( \left\{ S_{h}\right\} \right) =\{\left. \lambda \in {\mathbb{C}}%
\right\vert \exists \left\{ {\mathcal{R}}(\lambda ,S_{h})\right\} \in C_b\left(\left.(0,1\right],\ B\left(X\right)\right),\ {\ \mathop{\lim}_{h\rightarrow 0}\left\Vert \left(
\lambda I-S_{h}\right) {\mathcal{R}}\left( \lambda ,S_{h}\right)
-I\right\Vert \ }=
\end{equation*}%
\begin{equation*}
={\mathop{lim}_{h\rightarrow 0}\left\Vert {\mathcal{R}}\left( \lambda
,S_{h}\right) \left( \lambda I-S_{h}\right) -I\right\Vert \ }=0\}
\end{equation*}

We call the \textit{spectrum }of a family of operators $\left\{
S_{h}\right\} \ \in C_b\left(\left.(0,1\right],\ B\left(X\right)\right) $ the set%

\begin{equation*}
Sp\left( \left\{ S_{h}\right\} \right) \mathrm{=}{\mathbb{C}}\backslash
r\left( \left\{ S_{h}\right\} \right) \text{.}
\end{equation*}
 
\[Sp\left(\left\{S_h\right\}\right){\rm =}{\mathbb C}\backslash r\left(\left\{S_h\right\}\right).\]

\noindent $r\left(\left\{S_h\right\}\right)$ is an open set of $C$. If $\left\{S_h\right\}$ is a bounded family, then $Sp\left(\left\{S_h\right\}\right)$ is a compact set of $C$. 

\noindent 
\end{definition}

\begin{remark}
\noindent i) If $\lambda \in r\left(S_h\right)$\ for any $h\in (\left.0,1\right]$, then $\lambda \in r\left(\left\{S_h\right\}\right).$ Therefore $\bigcap_{h\in (\left.0,1\right]}{r\left(S_h\right)}\subseteq r\left(\left\{S_h\right\}\right)$\textit{$;$}

\noindent ii) If $\lambda \in Sp\left(\left\{S_h\right\}\right)$, then $\left|\lambda \right|\le {\mathop{\lim \sup }_{n\to \infty } {{\mathop{\lim }_{h\to 0} \left\|{S_h}^n\right\|\ }}^{\frac{1}{n}}\ }$;

\noindent iii) If $\left\|S_h\right\|<\left|\lambda \right|$ for any $h\in (\left.0,1\right]$, then $\lambda \in r\left(\left\{S_h\right\}\right);$ 

\noindent iv) If $\left\{S_h\right\}$ is bounded, then $\left\{{\mathcal R}\left(\lambda ,S_h\right)\right\}$ is also bounded, for every $\lambda \in r\left(\left\{S_h\right\}\right);$

\noindent v) If $\left\{S_h\right\}$ is bounded, then$\ {\ \mathop{lim}_{h\to 0} \left\|{\mathcal R}\left(\lambda ,S_h\right)\right\|\ne 0\ }$, for every $\lambda \in r\left(\left\{S_h\right\}\right).$

\noindent 
\end{remark}

\begin{proposition}
\noindent (resolvent equation - asymptotic) Let $\left\{S_h\right\}\ \subset B(X)$ be a bounded family and $\lambda ,\mu \in r(\left\{S_h\right\})$. Then

\[{\mathop{lim}_{h\to 0} \left\|{\mathcal R}\left(\lambda ,S_h\right)-{\mathcal R}\left(\mu ,S_h\right)-\left(\mu -\lambda \right){\mathcal R}\left(\lambda ,S_h\right){\mathcal R}\left(\mu ,S_h\right)\right\|\ }=0.\] 

\end{proposition}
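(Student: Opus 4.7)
The plan is to mimic the classical derivation of the resolvent equation, but to track the fact that $(\lambda I-S_h)\mathcal{R}(\lambda,S_h)$ and $\mathcal{R}(\lambda,S_h)(\lambda I-S_h)$ equal $I$ only in the limit $h\to 0$. Throughout, write $A_h=\mathcal{R}(\lambda,S_h)$ and $B_h=\mathcal{R}(\mu,S_h)$; by Remark iv both families are bounded in $B(X)$, so $M:=\sup_h\|A_h\|+\sup_h\|B_h\|<\infty$.

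The first step is the purely algebraic identity
\[
A_h(\mu I-S_h)B_h - A_h(\lambda I-S_h)B_h = (\mu-\lambda)A_h B_h,
\]
obtained by sandwiching $(\mu I-S_h)-(\lambda I-S_h)=(\mu-\lambda)I$ between $A_h$ and $B_h$. Next I would rewrite the two terms on the left by inserting the near-identities: on one side $A_h(\mu I-S_h)B_h = A_h+A_h\bigl[(\mu I-S_h)B_h-I\bigr]$, and on the other $A_h(\lambda I-S_h)B_h = B_h+\bigl[A_h(\lambda I-S_h)-I\bigr]B_h$. Subtracting and rearranging yields the key equality
\[
A_h-B_h-(\mu-\lambda)A_h B_h = \bigl[A_h(\lambda I-S_h)-I\bigr]B_h - A_h\bigl[(\mu I-S_h)B_h-I\bigr].
\]

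The final step is to take norms and pass to the limit. By the triangle inequality and submultiplicativity,
\[
\bigl\|A_h-B_h-(\mu-\lambda)A_h B_h\bigr\| \le \bigl\|A_h(\lambda I-S_h)-I\bigr\|\,\|B_h\| + \|A_h\|\,\bigl\|(\mu I-S_h)B_h-I\bigr\|.
\]
The boundedness of $\|A_h\|$ and $\|B_h\|$ by $M$, combined with the defining properties $\lim_{h\to 0}\|A_h(\lambda I-S_h)-I\|=0$ and $\lim_{h\to 0}\|(\mu I-S_h)B_h-I\|=0$ from Definition \ref{d3.1}, delivers the claim.

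The only real obstacle is deciding which of the four available ``$=I$ in the limit'' relations to insert on which side: one must use the left-inverse relation for $A_h$ and the right-inverse relation for $B_h$ (not the other way around) so that $A_h$ and $B_h$ respectively end up as the outer factors that are merely bounded, while the vanishing factors sit inside. Otherwise the argument is a direct transcription of the standard resolvent-equation computation.
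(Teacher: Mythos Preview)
The paper does not actually supply a proof of this proposition: it is listed among the results recalled from the author's earlier work \textit{Spectrum of a Family of Operators} [6], so there is no in-paper argument to compare against.

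Your argument is correct. The algebraic identity and the two rewritings are accurate, and the resulting equality
\[
A_h-B_h-(\mu-\lambda)A_hB_h=\bigl[A_h(\lambda I-S_h)-I\bigr]B_h-A_h\bigl[(\mu I-S_h)B_h-I\bigr]
\]
is exactly what one needs. The appeal to Remark~iv for the uniform bound on $\|A_h\|$ and $\|B_h\|$, together with the two limit relations from Definition~\ref{d3.1}, finishes the job cleanly. Your closing comment about which of the four near-identity relations to insert (left-inverse for $A_h$, right-inverse for $B_h$) is the only point where care is required, and you handled it correctly. This is the natural asymptotic version of the classical resolvent-equation derivation and is almost certainly what the cited paper does as well.
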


\begin{proposition}
\noindent Let $\left\{S_h\right\}\ \subset B(X)$ be a bounded family. If  $\lambda \in r\left(\left\{S_h\right\}\right)$ and $\left\{ {{\mathcal R}}_i\left(\lambda ,S_h\right)\} \ \in C_b\left(\left.(0,1\right],\ B\left(X\right)\right),\ i=\overline{1,2}\right\} $ such that

\[{\ \mathop{lim}_{h\to 0} \left\|\left(\lambda I-S_h\right){{\mathcal R}}_i\left(\lambda ,S_h\right)-I\right\|\ }={\mathop{lim}_{h\to 0} \left\|{{\mathcal R}}_i\left(\lambda ,S_h\right)\left(\lambda I-S_h\right)-I\right\|\ }=0\] 
 
\noindent for $i=\overline{1,2}$, then 
\[{\mathop{lim}_{h\to 0} \left\|{{\mathcal R}}_1\left(\lambda ,S_h\right)-{{\mathcal R}}_2\left(\lambda ,S_h\right)\right\|\ }=0.\] 
\end{proposition}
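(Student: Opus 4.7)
The plan is to mimic the classical argument for uniqueness of inverses in a Banach algebra, only now performed asymptotically as $h\to 0$. Classically, if $A$ is invertible and $R_1,R_2$ are both two--sided inverses, then $R_1=R_2 A R_1=R_2$; here I have only approximate left/right inverses, so I will quantify the defect from $I$ on both sides and control it via submultiplicativity.

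Set $A_h=\lambda I-S_h$. The key algebraic identity I will use is
\[
\mathcal{R}_1(\lambda,S_h)-\mathcal{R}_2(\lambda,S_h)=\mathcal{R}_2(\lambda,S_h)\bigl(A_h\mathcal{R}_1(\lambda,S_h)-I\bigr)-\bigl(\mathcal{R}_2(\lambda,S_h)A_h-I\bigr)\mathcal{R}_1(\lambda,S_h),
\]
obtained by inserting $I=A_h\mathcal{R}_1(\lambda,S_h)-(A_h\mathcal{R}_1(\lambda,S_h)-I)$ after $\mathcal{R}_2$ and $I=\mathcal{R}_2(\lambda,S_h)A_h-(\mathcal{R}_2(\lambda,S_h)A_h-I)$ before $\mathcal{R}_1$, and subtracting. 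Applying submultiplicativity of the norm yields
\[
\|\mathcal{R}_1(\lambda,S_h)-\mathcal{R}_2(\lambda,S_h)\|\le \|\mathcal{R}_2(\lambda,S_h)\|\cdot\|A_h\mathcal{R}_1(\lambda,S_h)-I\|+\|\mathcal{R}_2(\lambda,S_h)A_h-I\|\cdot\|\mathcal{R}_1(\lambda,S_h)\|.
\]

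Next I invoke the hypothesis $\{\mathcal{R}_i(\lambda,S_h)\}\in C_b((0,1],B(X))$, which supplies constants $M_i$ with $\|\mathcal{R}_i(\lambda,S_h)\|\le M_i$ uniformly in $h\in(0,1]$. The two factors $\|A_h\mathcal{R}_1(\lambda,S_h)-I\|$ and $\|\mathcal{R}_2(\lambda,S_h)A_h-I\|$ tend to $0$ as $h\to 0$ by assumption; multiplying by the uniform bounds $M_2$ and $M_1$ respectively, passing to $\lim_{h\to 0}$ on both sides of the inequality gives the desired conclusion.

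There is essentially no serious obstacle: the only point worth checking is that the right ordering of defect terms is chosen so that the bounded factors $\|\mathcal{R}_i\|$ carry the defects, while the defects themselves carry the decay. Boundedness of the families $\{\mathcal{R}_i(\lambda,S_h)\}$ is free from membership in $C_b$ (and is consistent with Remark iv)), so no auxiliary estimate is required.
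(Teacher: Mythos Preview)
Your argument is correct: the identity
\[
\mathcal{R}_1(\lambda,S_h)-\mathcal{R}_2(\lambda,S_h)=\mathcal{R}_2(\lambda,S_h)\bigl(A_h\mathcal{R}_1(\lambda,S_h)-I\bigr)-\bigl(\mathcal{R}_2(\lambda,S_h)A_h-I\bigr)\mathcal{R}_1(\lambda,S_h)
\]
is easily checked by expanding the right-hand side, and the norm estimate then follows from submultiplicativity together with the uniform bounds $\|\mathcal{R}_i(\lambda,S_h)\|\le M_i$ coming from membership in $C_b((0,1],B(X))$. You correctly pair the \emph{right} defect of $\mathcal{R}_1$ with the bounded factor $\mathcal{R}_2$, and the \emph{left} defect of $\mathcal{R}_2$ with the bounded factor $\mathcal{R}_1$, so that the hypotheses for $i=1$ and $i=2$ are each used exactly once.

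As for comparison with the paper: this proposition is stated in the Introduction without proof, being recalled from the author's earlier article~[6]. So there is no proof in the present paper to compare against. Your argument is the natural asymptotic adaptation of the classical uniqueness-of-inverse computation and is complete as written.
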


\begin{theorem}
\label{3.12} Let $\left\{S_h\right\}\in C_b\left(\left.(0,1\right],\ B\left(X\right)\right)$. Then
 
\[Sp\left(\dot{\left\{S_h\right\}}\right)=Sp\left(\left\{S_h\right\}\right).\] 
\end{theorem}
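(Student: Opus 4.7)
The plan is to prove the equality of the two sets by showing their complements (the resolvent sets) coincide, and this reduces to unwinding what invertibility in the quotient Banach algebra $B_\infty = C_b/C_0$ means. The two notions should match essentially by definition: a coset equals $\dot{\{I\}}$ precisely when any representative differs from $\{I\}$ by an element of $C_0$, i.e.\ by a family whose norm tends to $0$ as $h\to 0$.

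First I would fix $\lambda\in\mathbb{C}$ and note that, by compatibility of quotient operations with the algebra structure, $\lambda\dot{\{I\}}-\dot{\{S_h\}}=\dot{\{\lambda I-S_h\}}$, and for any $\dot{\{\mathcal{R}_h\}}\in B_\infty$ one has
\[
\bigl(\lambda\dot{\{I\}}-\dot{\{S_h\}}\bigr)\dot{\{\mathcal{R}_h\}}=\dot{\{(\lambda I-S_h)\mathcal{R}_h\}},\qquad \dot{\{\mathcal{R}_h\}}\bigl(\lambda\dot{\{I\}}-\dot{\{S_h\}}\bigr)=\dot{\{\mathcal{R}_h(\lambda I-S_h)\}}.
\]
Then I would observe that for a continuous bounded family $\{A_h\}\in C_b$, the identity $\dot{\{A_h\}}=\dot{\{I\}}$ in $B_\infty$ is equivalent to $\{A_h-I\}\in C_0$, i.e.\ $\lim_{h\to 0}\|A_h-I\|=0$.

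Second, I would show $r(\{S_h\})\subseteq r(\dot{\{S_h\}})$. Given $\lambda\in r(\{S_h\})$, by Definition \ref{d3.1} there exists $\{\mathcal{R}(\lambda,S_h)\}\in C_b$ with
\[
\lim_{h\to 0}\bigl\|(\lambda I-S_h)\mathcal{R}(\lambda,S_h)-I\bigr\|=\lim_{h\to 0}\bigl\|\mathcal{R}(\lambda,S_h)(\lambda I-S_h)-I\bigr\|=0.
\]
By the observation above, this says $\dot{\{(\lambda I-S_h)\mathcal{R}(\lambda,S_h)\}}=\dot{\{I\}}=\dot{\{\mathcal{R}(\lambda,S_h)(\lambda I-S_h)\}}$, hence $\dot{\{\mathcal{R}(\lambda,S_h)\}}$ is a two-sided inverse of $\dot{\{\lambda I-S_h\}}$ in $B_\infty$, giving $\lambda\in r(\dot{\{S_h\}})$.

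Third, I would show the reverse inclusion $r(\dot{\{S_h\}})\subseteq r(\{S_h\})$. If $\lambda\in r(\dot{\{S_h\}})$, then $\dot{\{\lambda I-S_h\}}$ is invertible in $B_\infty$, so there is a class $\dot{\{\mathcal{R}_h\}}\in B_\infty$ with $\dot{\{(\lambda I-S_h)\mathcal{R}_h\}}=\dot{\{\mathcal{R}_h(\lambda I-S_h)\}}=\dot{\{I\}}$. Choosing any bounded representative $\{\mathcal{R}_h\}\in C_b$ of that class (which exists since $B_\infty$ is defined as a quotient of $C_b$) and translating the quotient identities back via the observation above yields the two asymptotic relations required by Definition \ref{d3.1}, so $\lambda\in r(\{S_h\})$. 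Taking complements in $\mathbb{C}$ gives $Sp(\dot{\{S_h\}})=Sp(\{S_h\})$.

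The argument is essentially a bookkeeping unraveling of the quotient algebra structure, and I do not anticipate a serious obstacle; the only mild point that must be checked carefully is that the resolvent $\dot{\{\mathcal{R}_h\}}$ admits a representative lying in $C_b$ (not merely a bounded family of operators without the continuity in $h$), but this is built into the construction of $B_\infty$.
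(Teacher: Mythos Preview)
Your argument is correct: the equality $r(\{S_h\})=r(\dot{\{S_h\}})$ is precisely the statement that invertibility of $\dot{\{\lambda I-S_h\}}$ in the quotient algebra $B_\infty$ is the same as the existence of a bounded family $\{\mathcal R(\lambda,S_h)\}\in C_b$ whose products with $\{\lambda I-S_h\}$ lie in the coset $\dot{\{I\}}$, and you have unwound this cleanly in both directions. The caveat you flag about choosing a representative in $C_b$ is indeed automatic, since $B_\infty$ is defined as $C_b/C_0$.

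Note, however, that the present paper does not actually supply a proof of this theorem: it is listed among the results recalled from the author's earlier paper~[6], so there is no proof here to compare your approach against. Your argument is the natural one and is almost certainly what is intended in~[6].
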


\begin{theorem}
\noindent If two bounded families $\left\{S_h\right\},\ \left\{T_h\right\}\ \subset B(X)$ are asymptotically equivalent, then
\[Sp\left(\left\{S_h\right\}\right)=Sp\left(\left\{T_h\right\}\right).\] 
\end{theorem}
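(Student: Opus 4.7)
The plan is to reduce the statement to Theorem~\ref{3.12}. By the definition of asymptotic equivalence, $\lim_{h\to 0}\|S_h-T_h\|=0$, which is precisely the statement that $\{S_h-T_h\}\in C_0((0,1],B(X))$. This means $\{S_h\}$ and $\{T_h\}$ represent the same element in the quotient algebra $B_\infty$, i.e.\ $\dot{\{S_h\}}=\dot{\{T_h\}}$; in particular these two equivalence classes have the same spectrum.

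Applying Theorem~\ref{3.12} to each family then gives
\[
Sp(\{S_h\}) \;=\; Sp(\dot{\{S_h\}}) \;=\; Sp(\dot{\{T_h\}}) \;=\; Sp(\{T_h\}),
\]
which is the desired conclusion.

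A more pedestrian alternative, bypassing the quotient algebra, is to prove $r(\{S_h\})\subseteq r(\{T_h\})$ directly and conclude by symmetry. Given $\lambda\in r(\{S_h\})$, one would try $\mathcal{R}(\lambda,T_h):=\mathcal{R}(\lambda,S_h)$ as an asymptotic resolvent for $\{T_h\}$ and exploit the identity
\[
(\lambda I - T_h)\mathcal{R}(\lambda,S_h) - I \;=\; \bigl[(\lambda I - S_h)\mathcal{R}(\lambda,S_h) - I\bigr] + (S_h - T_h)\mathcal{R}(\lambda,S_h),
\]
the first bracket vanishing in norm because $\lambda\in r(\{S_h\})$ and the second because $\|S_h-T_h\|\to 0$, provided $\{\mathcal{R}(\lambda,S_h)\}$ is uniformly bounded in $h$. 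The same identity on the other side handles the left-sided defining equality. The only potential obstacle in this direct route is precisely this uniform boundedness of $\mathcal{R}(\lambda,S_h)$; but it is supplied free of charge by Remark~iv of the previous section, so there is no serious obstacle and the theorem is essentially a formal corollary of Theorem~\ref{3.12}.
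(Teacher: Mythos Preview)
Your proof is correct. The first argument --- passing to the quotient $B_\infty$ and invoking Theorem~\ref{3.12} --- is clean and complete: asymptotic equivalence is precisely the statement that $\{S_h-T_h\}\in C_0((0,1],B(X))$, so $\dot{\{S_h\}}=\dot{\{T_h\}}$ in $B_\infty$, and Theorem~\ref{3.12} then transfers equality of spectra back to the families.

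Note, however, that the present paper does not actually prove this theorem; it is one of several results recalled without proof from the author's earlier article~[6] in the introductory section. There is therefore no proof in this paper to compare against. Your quotient-algebra route is almost certainly the intended one, since the whole purpose of introducing $B_\infty$ and establishing Theorem~\ref{3.12} is to make exactly this kind of reduction available.

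One minor remark on your direct approach: the boundedness of $\{\mathcal{R}(\lambda,S_h)\}$ that you flag as a potential obstacle is not an issue at all --- it is already part of Definition~\ref{d3.1}, which requires $\{\mathcal{R}(\lambda,S_h)\}\in C_b((0,1],B(X))$. The remark you cite merely restates this. So both routes go through without difficulty.
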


\section{Local Spectrum of a Family of Operators}

\noindent 

\noindent Let ${\mathcal O}$ be the set of analytic functions families ${\left\{f_h\right\}}_{h\in \left.(0,1\right]}$ defined on an open complex set with values in a Banach space \textit{X}, having property \[\overline{\mathop{{\rm lim}}_{h\to 0}}\left\|f_h(\lambda )\right\|<\infty, \] for any $\lambda $ from definition set.

\noindent 
\begin {definition}
\noindent A bounded continue family of operators $\left\{T_h\right\}\subset B(X)$ we said to have \textit{single-valued extension property}, if for any family of analytic functions ${\left\{f_h\right\}}_{h\in \left.(0,1\right]}\in {\mathcal O}$, $f_h:D\to X$, where $D\subset {\mathbb C}$ open, with property \[\mathop{{\rm lim}}_{h\to 0}\left\|\left(\lambda I-T_h\right)f_h(\lambda )\right\|\equiv 0,\] it results $\mathop{{\rm lim}}_{h\to 0}\left\|f_h(\lambda )\right\|\equiv 0.$
\noindent 
\end {definition}

\begin{remark}
\noindent Let $\left\{S_h\right\},\left\{T_h\right\}\subset B(X)$  be two bounded continue families of operators asymptotically equivalent\textit{. }If $\left\{S_h\right\}$ has single-valued extension property, then $\left\{T_h\right\}$ has also single-valued extension property\textit{.}

\noindent 
\end{remark}

\begin{proof}
\noindent Let ${\left\{f_h\right\}}_{h\in \left.(0,1\right]}\in {\mathcal O}$ be a family of functions, $f_h:D\to X$, where $D\subset {\mathbb C}$ open, with $\mathop{{\rm lim}}_{h\to 0}\left\|\left(\lambda I-T_h\right)f_h(\lambda )\right\|\equiv 0$. Then

\noindent 
\[\mathop{\overline{{\rm lim}}}_{h\to 0}\left\|\left(\lambda I-S_h\right)f_h(\lambda )\right\|=\mathop{\overline{{\rm lim}}}_{h\to 0}\left\|\left(\lambda I-S_h-T_h+T_h\right)f_h(\lambda )\right\|\le\]
\[ \mathop{{\rm lim}}_{h\to 0}\left\|\left(\lambda I-T_h\right)f_h(\lambda )\right\|+\mathop{\overline{{\rm lim}}}_{h\to 0}\left\|\left(S_h-T_h\right)f_h(\lambda )\right\|\le \mathop{{\rm lim}}_{h\to 0}\left\|\left(S_h-T_h\right)\right\|\overline{\mathop{{\rm lim}}_{h\to 0}}\left\|f_h(\lambda )\right\|,\]

\noindent for any $\lambda \in D$.

\noindent Raking into account $\left\{S_h\right\},\left\{T_h\right\}$ are asymptotically equivalent, it follows

\noindent 
\[\mathop{{\rm lim}}_{h\to 0}\left\|\left(\lambda I-T_h\right)f_h(\lambda )\right\|\equiv 0.\]

\noindent Since $\left\{T_h\right\}$ has single-valued extension property, we obtain  $\mathop{{\rm lim}}_{h\to 0}\left\|f_h(\lambda )\right\|\equiv 0$, thus $\left\{S_h\right\}$ has single-valued extension property.

\noindent 
\end{proof}

\begin{definition}
\label{4.2}
\noindent  Let $\left\{T_h\right\}\ \subset B(X)$ be a family with single-valued extension property and $x\in X$\textit{. }From now we consider\textit{  }$r_{\left\{T_h\right\}}\left(x\right)$ being the set of elements ${\lambda }_0\in {\mathbb C}$ such that there are the analytic functions from $\mathcal{O}$ $\lambda \mapsto x_h\left(\lambda \right)$ defined on an open neighborhood of\textit{  }${\lambda }_0$ $D\subset r_{\left\{T_h\right\}}\left(x\right)$ with values in\textit{ X, } for any $h\in \left.(0,1\right]$, having property

\noindent 
\[\mathop{lim}_{h\to 0}\left\|\left(\lambda I-T_h\right)x_h\left(\lambda \right)-x\right\|\equiv 0. \]

\noindent $r_{\left\{T_h\right\}}\left(x\right)$ is called \textit{the local resolvent set of}\textbf{\textit{  }}$\left\{T_h\right\}$\textit{ at  }$x$\textbf{\textit{.}}

\noindent The \textit{local spectrum of  }$\left\{T_h\right\}$\textit{ at  }$x$\textbf{ }is defined as the set 

\noindent 
\[{Sp}_{\left\{T_h\right\}}\left(x\right){\rm =}{\mathbb C}\backslash r_{\left\{T_h\right\}}\left(x\right).\]

\noindent We also define the \textit{local spectral space}\textbf{ }of $\left\{T_h\right\}$ as

\noindent 
\[X_{\left\{T_h\right\}}\left(a\right)=\left\{x\in X|{Sp}_{\left\{T_h\right\}}\left(x\right)\subset a\right\},\] 
for all sets\textit{  }$a\subset {\mathbb C}$\textit{. }

\noindent 
\end{definition}

\noindent Let be the set 

\noindent 
\[X_b\left(\left.(0,1\right],\ X\right)=\left\{\left.\varphi :\left.(0,1\right]\to X\right|\varphi \left(h\right)=x_h\ {\rm such\ that}\ \varphi \ {\rm is}\ {\rm continue\ and\ bounded}\right\}=\]
\[=\left\{\left.{\left\{x_h\right\}}_{h\in \left.(0,1\right]}\subset X\right|{\left\{x_h\right\}}_{h\in \left.(0,1\right]}{\rm \ a\ bounded\ sequence,\ i.e.\ }{\mathop{sup}_{h\in \left.(0,1\right]} \left\|x_h\right\|\ }<\infty \right\}.\] 
and
\[X_0\left(\left.(0,1\right],\ X\right)=\left\{\left.\varphi \in X_b\left(\left.(0,1\right],\ X\right)\right|{\mathop{\lim }_{h\to 0} \left\|\varphi (h)\right\|=0\ }\right\}=\]
\[=\left\{\left.{\left\{x_h\right\}}_{h\in \left.(0,1\right]}\subset X\right|{\mathop{\lim }_{h\to 0} \left\|x_h\right\|\ }=0\right\}.\]

\noindent $X_b\left(\left.(0,1\right],\ X\right)\ $ is a Banach space in rapport with norm

\noindent 
\[\left\|\varphi \right\|={sup}_{h\in \left.(0,1\right]}\left\|\varphi (h)\right\|\ \Leftrightarrow \ \left\|\left\{x_h\right\}\right\|={sup}_{h\in \left.(0,1\right]}\left\|x_h\right\|,\]

\noindent and $X_0\left(\left.(0,1\right],\ X\right)$ is a closed subspace of  $X_b\left(\left.(0,1\right],\ X\right)$. Therefore, the quotient space $X_b\left(\left.(0,1\right],\ X\right)/X_0\left(\left.(0,1\right],X\right)$, which will be called from now $X_{\infty }$, is a Banach space in rapport  with quotient norm

\noindent 
\[\left\|\dot{\left\{x_h\right\}}\right\|={inf}_{{\left\{u_h\right\}}_{h\in \left.(0,1\right]}\in X_0\left(\left.(0,1\right],\ X\right)}\left\|\left\{x_h\right\}+\left\{u_h\right\}\right\|=\]
\[={inf}_{{\left\{y_h\right\}}_{h\in \left.(0,1\right]}\in \dot{\left\{x\right\}}}\left\|\left\{y_h\right\}\right\|={inf}_{{\left\{y_h\right\}}_{h\in \left.(0,1\right]}\in \dot{\left\{x_h\right\}}}{sup}_{h\in \left.(0,1\right]}\left\|y_h\right\|.\] 
Thus 
\[\left\|\dot{\left\{x_h\right\}}\right\|={inf}_{{\left\{y_h\right\}}_{h\in \left.(0,1\right]}\in \dot{\left\{x_h\right\}}}\left\|\left\{y_h\right\}\right\|\le \left\|\left\{y_h\right\}\right\|={sup}_{h\in \left.(0,1\right]}\left\|y_h\right\|,\]

\noindent for all  ${\left\{y_h\right\}}_{h\in \left.(0,1\right]}\in \dot{\left\{x_h\right\}}$. 

\noindent 

\noindent Let $B_{\infty }=C_b\left(\left.(0,1\right],\ B\left(X\right)\right)/C_0\left(\left.(0,1\right],\ B\left(X\right)\right)$ and we consider the application $\Psi$ defines by

\noindent 
\[\left(\dot{\left\{T_h\right\}},\dot{\left\{x_h\right\}}\right)\longmapsto \dot{\left\{T_hx_h\right\}}:B_{\infty }\times X_{\infty }\to X_{\infty }.\] 

\begin{remark}
\noindent $X_{\infty }$ is a $B_{\infty }-\ {\rm Banach\ module}$ in rapport with the above application.
\end{remark}
\noindent 

\begin{proof}
\noindent Is the application well defined (i.e. not depending by selection of representatives)?

\noindent Let ${\left\{S_h\right\}}_{h\in \left.(0,1\right]}\in \dot{\left\{T_h\right\}}$ and ${\left\{y_h\right\}}_{h\in \left.(0,1\right]}\in \dot{\left\{x_h\right\}}$. Then

\noindent 
\[\mathop{\overline{{\rm lim}}}_{h\to 0}\left\|S_hy_h-T_hx_h\right\|=\mathop{\overline{{\rm lim}}}_{h\to 0}\left\|S_hy_h-T_hy_h+T_hy_h-T_hx_h\right\|\le\]
\[\le \mathop{\overline{{\rm lim}}}_{h\to 0}\left\|S_hy_h-T_hy_h\right\|+\mathop{\overline{{\rm lim}}}_{h\to 0}\left\|T_hy_h-T_hx_h\right\|\le\]
\[\le \mathop{{\rm lim}}_{h\to 0}\left\|S_h-T_h\right\|\mathop{\overline{{\rm lim}}}_{h\to 0}\left\|y_h\right\|+\mathop{\mathop{\overline{{\rm lim}}}_{h\to 0}\left\|T_h\right\|{\rm lim}}_{h\to 0}\left\|y_h-x_h\right\|=0.\]

\noindent Therefore ${\left\{S_hy_h\right\}}_{h\in \left.(0,1\right]}\in \dot{\left\{T_hx_h\right\}}$, for any ${\left\{S_h\right\}}_{h\in \left.(0,1\right]}\in \dot{\left\{T_h\right\}}$ and ${\left\{y_h\right\}}_{h\in \left.(0,1\right]}\in \dot{\left\{x_h\right\}}$.

\noindent Is  $\Psi$ a bilinear application?

\noindent 
\[\Psi \left(\alpha \dot{\left\{T_h\right\}}+\beta \dot{\left\{S_h\right\}},\dot{\left\{x_h\right\}}\right)=\Psi \left(\dot{\left\{\alpha T_h+\beta S_h\right\}},\dot{\left\{x_h\right\}}\right)=\]
\[=\dot{\left\{(\alpha T_h+\beta S_h)x_h\right\}}=\dot{\left\{\alpha T_hx_h+\beta S_hx_h\right\}}=\]
\[=\alpha \dot{\left\{T_hx_h\right\}}+\beta \dot{\left\{S_hx_h\right\}}=\alpha \Psi \left(\dot{\left\{T_h\right\}},\dot{\left\{x_h\right\}}\right)+\beta \Psi \left(\dot{\left\{S_h\right\}},\dot{\left\{x_h\right\}}\right),\]

\noindent for any $\alpha ,\beta \in {\mathbb C}$.

\noindent Analogously we can prove that 

\noindent 
\[\Psi \left(\dot{\left\{T_h\right\}},\alpha \dot{\left\{y_h\right\}}+\beta \dot{\left\{x_h\right\}}\right)=\alpha \Psi \left(\dot{\left\{T_h\right\}},\dot{\left\{y_h\right\}}\right)+\beta \Psi \left(\dot{\left\{T_h\right\}},\dot{\left\{x_h\right\}}\right).\]

\noindent Is $\Psi$ a continue application?

\noindent 
\[\left\|\Psi \left(\dot{\left\{T_h\right\}},\dot{\left\{x_h\right\}}\right)\right\|=\left\|\dot{\left\{T_hx_h\right\}}\right\|=\]
\[={inf}_{\dot{\left\{T_hx_h\right\}}}\left\|\left\{T_hx_h\right\}\right\|={inf}_{\dot{\left\{T_hx_h\right\}}}{sup}_{h\in \left.(0,1\right]}\left\|T_hx_h\right\|\le\]
\[\le {inf}_{\dot{\left\{T_hx_h\right\}}}{sup}_{h\in \left.(0,1\right]}\left\|T_h\right\|\left\|x_h\right\|\le {inf}_{\dot{\left\{T_h\right\}},\dot{\left\{x_h\right\}}}{sup}_{h\in \left.(0,1\right]}\left\|T_h\right\|\left\|x_h\right\|\le\]
\[\le {inf}_{\dot{\left\{T_h\right\}}}{sup}_{h\in \left.(0,1\right]}\left\|T_h\right\|{inf}_{\dot{\left\{x_h\right\}}}{sup}_{h\in \left.(0,1\right]}\left\|x_h\right\|=\left\|\dot{\left\{T_h\right\}}\right\|\left\|\dot{\left\{x_h\right\}}\right\|.\]

\noindent Thus $\left\|\Psi \left(\dot{\left\{T_h\right\}},\dot{\left\{x_h\right\}}\right)\right\|\le \left\|\dot{\left\{T_h\right\}}\right\|\left\|\dot{\left\{x_h\right\}}\right\|$.

\noindent Let $\dot{\left\{T_h\right\}}\in \ B_{\infty }$ be fixed. The application $\dot{\left\{x_h\right\}}\longmapsto \dot{\left\{T_hx_h\right\}}$ is a linear bounded  operator on $X_{\infty }$?

\noindent 
\[\dot{\left\{T_h(\alpha x_h+\beta y_h)\right\}}=\dot{\left\{\alpha T_hx_h+\beta T_hy_h\right\}=}\dot{\alpha \left\{T_hx_h\right\}}+\beta \dot{\left\{T_hy_h\right\}}.\]

\noindent In addition, since
\[\left\|\dot{\left\{T_hx_h\right\}}\right\|\le \left\|\dot{\left\{T_h\right\}}\right\|\left\|\dot{\left\{x_h\right\}}\right\|,\]

\noindent it follows the application $\dot{\left\{x_h\right\}}\longmapsto \dot{\left\{T_hx_h\right\}}$ is a bounded operator.

\noindent Therefore, $B_{\infty }\subseteq B(X_{\infty })$, where $B(X_{\infty })$ is the algebra of linear bounded operators on  $X_{\infty }.$ 
\end{proof}
\noindent

\begin{definition}
\noindent We say that  ${\dot{\left\{T_h\right\}}}_{h\in \left.(0,1\right]}\in B_{\infty }$ has\textit{ single-valued extension property }if for any analytic function $f:D_0\to X_{\infty }$, where $D_0$ is an open complex set with $\left(\lambda \dot{\left\{I\right\}}-\dot{\left\{T_h\right\}}\right)f(\lambda )\equiv 0$, we have $f(\lambda )\equiv 0$, where $0=\dot{\left\{0\right\}}=X_0\left(\left.(0,1\right],\ X\right)$\textit{.}
\end{definition}
\noindent

\noindent Since $f{\rm (}\lambda )\in X_{\infty }$, it follows there is  $\dot{\left\{x_h(\lambda )\right\}}\in \ X_{\infty }$ such that $f\left(\lambda \right)=\dot{\left\{x_h(\lambda )\right\}}$. Then

\noindent 
\[0\equiv \left(\lambda \dot{\left\{I\right\}}-\dot{\left\{T_h\right\}}\right)f\left(\lambda \right)=\dot{\left\{\lambda I-T_h\right\}}\dot{\left\{x_h(\lambda )\right\}}=\dot{\left\{(\lambda I-T_h)x_h(\lambda )\right\}},\]

\noindent i.e. ${\mathop{\lim }_{h\to 0} \left\|(\lambda I-T_h)x_h(\lambda )\right\|\ }=0$.
\begin{definition}
\noindent We say  ${\dot{\left\{T_h\right\}}}_{h\in \left.(0,1\right]}\in B_{\infty }$ has the \textit{single-valued extension property }if for any analytic function $f:D_0\to X_{\infty }$, where $D_0$ is an open complex set with ${\mathop{lim}_{h\to 0} \left\|(\lambda I-T_h)x_h(\lambda )\right\|\ }\equiv 0$ we have ${\mathop{lim}_{h\to 0} \left\|x_h(\lambda )\right\|\ }\equiv 0$\textit{.}

\noindent 

\noindent The \textit{resolvent set} of an element $\dot{\left\{x_h\right\}}\in \ X_{\infty }$ in rapport with  ${\dot{\left\{T_h\right\}}}_{h\in \left.(0,1\right]}\in B_{\infty }$ is

\noindent 
\[r_{\dot{\left\{T_h\right\}}}\left(\dot{\left\{x_h\right\}}\right)=\left\{\left.{\lambda }_0\in {\mathbb C}\right|\exists \ an\ analytic\ function {\left(\lambda \dot{\left\{I\right\}}-\dot{\left\{T_h\right\}}\right)}\dot{\left\{x_h(\lambda )\right\}}\equiv \dot{\left\{x_h\right\}}\ \right\}=\]
\[=\{\left.{\lambda }_0\in {\mathbb C}\right|\exists \ an\ analytic\ function\ \lambda \mapsto \dot{\left\{x_h(\lambda )\right\}}:V_{{\lambda }_0}\to X_{\infty }, \]
\[ {\mathop{\lim }_{h\to 0} \left\|\left(\lambda I-T_h\right)x_h\left(\lambda \right)-x_h\right\|\ }\equiv 0\ \},\] 
when $V_{{\lambda }_0}$ is an open neighborhood of ${\lambda }_0$.

\noindent Let $\dot{\left\{x\right\}}\in \ X_{\infty }$, where $\dot{\left\{x\right\}}=\left\{\left.\left\{x_h\right\}\in X_b\left(\left.(0,1\right],\ X\right)\right|{\mathop{\lim }_{h\to 0} \left\|x_h-x\right\|\ }=0\right\}$. 

\noindent We will call from now
\[X^0_{\infty }=\left\{\left.\dot{\left\{x\right\}}\in X_{\infty }\right|x\in X\right\}\subset X_{\infty }.\] 
Thus
\[r_{\dot{\left\{T_h\right\}}}\left(\dot{\left\{x\right\}}\right)=\{\left.{\lambda }_0\in {\mathbb C}\right|\exists \ {\rm an\ analytic\ function}\ \lambda \mapsto \dot{\left\{x_h(\lambda )\right\}}:V_{{\lambda }_0}\to X_{\infty }, \]
\[{\mathop{\lim }_{h\to 0} \left\|\left(\lambda I-T_h\right)x_h\left(\lambda \right)-x\right\|\ }\equiv 0\ \}.\] 
\end{definition}

\begin{theorem}
\label{4.5}
\noindent ${\dot{\left\{T_h\right\}}}_{h\in \left.(0,1\right]}\in B_{\infty }$ has the single-valued extension property if and only if there is  $\left\{T_h\right\}\in \dot{\left\{T_h\right\}}$ with single-valued extension property\textit{.}
\end{theorem}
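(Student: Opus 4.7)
The plan is to unpack both versions of the single-valued extension property and show that they are two faces of the same condition, translated through the quotient $X_\infty = X_b/X_0$. The guiding dictionary is that $\lim_{h\to 0}\|u_h\|=0$ is the same as $\dot{\{u_h\}}=0$ in $X_\infty$, and $\{S_h\}\in\dot{\{T_h\}}$ is the same as asymptotic equivalence of $\{S_h\}$ with $\{T_h\}$. Combined with the Remark already proved in Section~2 (asymptotic equivalence preserves SVEP for bounded continuous families), the statement ``there is $\{T_h\}\in\dot{\{T_h\}}$ with SVEP'' can be upgraded to ``every representative has SVEP'', so I only need to pair off a single representative with the coset.

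For the implication $(\Leftarrow)$, I would fix any $\{T_h\}\in\dot{\{T_h\}}$ known to have SVEP and take an arbitrary analytic $f:D_0\to X_\infty$ with $(\lambda\dot{\{I\}}-\dot{\{T_h\}})f(\lambda)\equiv 0$. Writing $f(\lambda)=\dot{\{x_h(\lambda)\}}$ for some family of representatives belonging to $\mathcal{O}$, the hypothesis translates (as noted by the author right after Definition~\ref{4.2}) into $\lim_{h\to 0}\|(\lambda I-T_h)x_h(\lambda)\|\equiv 0$. Applying the SVEP of $\{T_h\}$ gives $\lim_{h\to 0}\|x_h(\lambda)\|\equiv 0$, which is precisely $f(\lambda)\equiv 0$ in $X_\infty$.

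For $(\Rightarrow)$, assume $\dot{\{T_h\}}$ has SVEP and pick any $\{T_h\}\in\dot{\{T_h\}}$. Given $\{f_h\}\in\mathcal{O}$ with $f_h:D\to X$ analytic and $\lim_{h\to 0}\|(\lambda I-T_h)f_h(\lambda)\|\equiv 0$, I would set $f(\lambda):=\dot{\{f_h(\lambda)\}}\in X_\infty$ and check that $f:D\to X_\infty$ is analytic (locally around any $\lambda_0\in D$ the expansions $f_h(\lambda)=\sum_n a_{h,n}(\lambda-\lambda_0)^n$ pass to the quotient and converge in $X_\infty$ because the quotient norm is dominated by $\sup_h\|\cdot\|$). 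Then $(\lambda\dot{\{I\}}-\dot{\{T_h\}})f(\lambda)=\dot{\{(\lambda I-T_h)f_h(\lambda)\}}=0$ in $X_\infty$, so SVEP of $\dot{\{T_h\}}$ forces $f\equiv 0$, i.e.\ $\lim_{h\to 0}\|f_h(\lambda)\|\equiv 0$, as required.

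The only delicate step is the bookkeeping about analyticity in $X_\infty$ versus parametric analyticity in $X$. In the $(\Rightarrow)$ direction this is pushing a convergent $X_b$-valued power series into the quotient, which is routine because the quotient map is a contraction. The genuine obstacle is in $(\Leftarrow)$: the definition of SVEP for $\dot{\{T_h\}}$ only hands us an abstract analytic map into $X_\infty$, and to invoke the SVEP of $\{T_h\}$ I need pointwise representatives $\lambda\mapsto x_h(\lambda)$ forming a family in $\mathcal{O}$. The way to get around this is to use the fact that the condition $(\lambda\dot{\{I\}}-\dot{\{T_h\}})f(\lambda)\equiv 0$ has already been rephrased by the author in terms of representatives right after the definition, so the existence of some admissible $\{x_h(\lambda)\}$ is built into the hypothesis; one just has to verify that these representatives can be chosen analytic in $\lambda$, which again reduces to lifting a locally uniformly convergent $X_\infty$-series to a locally uniformly convergent $X_b$-series by selecting near-optimal representatives of each Taylor coefficient.
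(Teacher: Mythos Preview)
Your proposal is correct and follows essentially the same route as the paper: both directions are handled by passing between $\{f_h\}\in\mathcal{O}$ and $f(\lambda)=\dot{\{f_h(\lambda)\}}$ via the quotient map and then invoking the appropriate SVEP. The only minor difference is in the analyticity bookkeeping --- the paper argues with difference quotients where you propose lifting Taylor coefficients --- and you are in fact more explicit than the paper about the genuinely delicate point of producing analytic representatives in the $(\Leftarrow)$ direction.
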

\noindent 

\begin{proof}
\noindent Let ${\left\{f_h\right\}}_{h\in \left.(0,1\right]}\in {\mathcal O}$, $f_h:D\to X$\textit{,} be a family of analytic functions, when $D\subset {\mathbb C}$ open, with $\mathop{{\rm lim}}_{h\to 0}\left\|\left(\lambda I-T_h\right)f_h(\lambda )\right\|\equiv 0$.

\noindent Since ${\left\{f_h\right\}}_{h\in \left.(0,1\right]}\in {\mathcal O}$, it follows that  $\overline{\mathop{{\rm lim}}_{h\to 0}}\left\|f_h(\lambda )\right\|<\infty $, $\forall \lambda \in D$, thus $\left\{f_h(\lambda )\right\}\in X_b\left(\left.(0,1\right],\ X\right)$.

\noindent Let $f:D\to X_{\infty }$ be an application defined by $f\left(\lambda \right)=\dot{\left\{f_h(\lambda )\right\}}$. We prove that $f$ is an analytic function.

\noindent Having in view $\left\{f_h\right\}$ are analytic functions on \textit{D}, for any ${\lambda }_0\in D$, we obtain

\noindent 
\[{\mathop{\lim }_{\lambda \to {\lambda }_0} \frac{f\left(\lambda \right)-f\left({\lambda }_0\right)}{\lambda -{\lambda }_0}\ }={\mathop{\lim }_{\lambda \to {\lambda }_0} \frac{\dot{\left\{f_h(\lambda )\right\}}-\dot{\left\{f_h({\lambda }_0)\right\}}}{\lambda -{\lambda }_0}\ }=\]
\[={\mathop{\lim }_{\lambda \to {\lambda }_0} \dot{\left\{\frac{f_h\left(\lambda \right)-f_h\left({\lambda }_0\right)}{\lambda -{\lambda }_0}\right\}}\ }=\dot{\left\{{\mathop{\lim }_{\lambda \to {\lambda }_0} \frac{f_h\left(\lambda \right)-f_h\left({\lambda }_0\right)}{\lambda -{\lambda }_0}\ }\right\}},\]

\noindent for any $\lambda \in D$. Therefore, $f$ is analytic function on \textit{D}.

\noindent By relation $\mathop{{\rm lim}}_{h\to 0}\left\|\left(\lambda I-T_h\right)f_h(\lambda )\right\|\equiv 0$, i.e. $\left(\lambda \dot{\left\{I\right\}}-\dot{\left\{T_h\right\}}\right)f{\rm (}\lambda )\equiv \dot{\left\{0\right\}}$, since $\dot{\left\{T_h\right\}}$ has the single-valued extension property, it follows that $f{\rm (}ëë)\equiv \dot{\left\{0\right\}}$, i.e. \[ \mathop{{\rm lim}}_{h\to 0}\left\|f_h(\lambda )\right\|\equiv 0.\] Hence $\left\{T_h\right\}$ has the single-valued extension property.

\noindent \textbf{Reciprocal:} Let $\left\{T_h\right\}$ has the single-valued extension property. We prove $\dot{\left\{T_h\right\}}$ has also the single-valued extension property.

\noindent Let $f:D\to X_{\infty }$ be an analytic application defined by $f\left(\lambda \right)=\dot{\left\{x_h(\lambda )\right\}}$ such that

\noindent 
\[\left(\lambda \dot{\left\{I\right\}}-\dot{\left\{T_h\right\}}\right)f{\rm (}\lambda )\equiv \dot{\left\{0\right\}}.\] 
Then $\mathop{{\rm lim}}_{h\to 0}\left\|\left(\lambda I-T_h\right)x_h(\lambda )\right\|\equiv 0$.

\noindent We prove that the applications $\lambda \longmapsto x_h\left(\lambda \right):D\to X$ are analytical, $\forall h\in \left.(0,1\right]$.

\noindent Since $f$ is analytical function, it follows that 

\noindent 
\[f^{'}\left({\lambda }_0\right)={\mathop{\lim }_{\lambda \to {\lambda }_0} \frac{f\left(\lambda \right)-f\left({\lambda }_0\right)}{\lambda -{\lambda }_0}\ }={\mathop{\lim }_{\lambda \to {\lambda }_0} \frac{\dot{\left\{x_h(\lambda )\right\}}-\dot{\left\{x_h({\lambda }_0)\right\}}}{\lambda -{\lambda }_0}\ }={\mathop{\lim }_{\lambda \to {\lambda }_0} \dot{\left\{\frac{x_h\left(\lambda \right)-x_h\left({\lambda }_0\right)}{\lambda -{\lambda }_0}\right\}}\ }.\]

\noindent Therefore, there is $\dot{\left\{{\mathop{\lim }_{\lambda \to {\lambda }_0} \frac{x_h\left(\lambda \right)-x_h\left({\lambda }_0\right)}{\lambda -{\lambda }_0}\ }\right\}}\in X_{\infty }$ and thus there is ${\mathop{\lim }_{\lambda \to {\lambda }_0} \frac{x_h\left(\lambda \right)-x_h\left({\lambda }_0\right)}{\lambda -{\lambda }_0}\ }\in X$, $\forall h\in \left.(0,1\right].$

\noindent Since $\left(\lambda \dot{\left\{I\right\}}-\dot{\left\{T_h\right\}}\right)f\left(\lambda \right)\equiv \dot{\left\{0\right\}}$, i.e. ${\mathop{\lim }_{h\to 0} \left\|(\lambda I-T_h)x_h(\lambda )\right\|\ }\equiv 0$, taking into account $\left\{T_h\right\}$ has the single-valued extension property, we have  ${\mathop{\lim }_{h\to 0} \left\|x_h(\lambda )\right\|\ }\equiv 0$, i.e. $\dot{\left\{x_h(\lambda )\right\}}=\dot{\left\{0\right\}}$. Therefore, $\dot{\left\{T_h\right\}}$ has the single-valued extension property.
\end{proof}
\noindent

\begin{proposition}
\label{4.6}
\noindent Let ${\dot{\left\{T_h\right\}}}_{h\in \left.(0,1\right]}\in B_{\infty }$ with the single-valued extension property. Then

\noindent
\[r_{\left\{T_h\right\}}\left(x\right)=r_{\dot{\left\{T_h\right\}}}\left(\dot{\left\{x\right\}}\right),\] 
for all $x\in X$.
\end{proposition}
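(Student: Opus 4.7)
The plan is to prove both inclusions separately, each via the passage between families of analytic $X$-valued functions and analytic $X_\infty$-valued functions already used in the proof of Theorem \ref{4.5}.

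For $r_{\{T_h\}}(x) \subseteq r_{\dot{\{T_h\}}}(\dot{\{x\}})$, I would take $\lambda_0 \in r_{\{T_h\}}(x)$ and, by Definition \ref{4.2}, obtain an open neighborhood $D$ of $\lambda_0$ and a family $\{x_h(\cdot)\}_{h \in (0,1]} \in \mathcal{O}$ of analytic functions $x_h: D \to X$ with $\lim_{h \to 0}\|(\lambda I - T_h)x_h(\lambda) - x\| \equiv 0$ on $D$. Set $f(\lambda) = \dot{\{x_h(\lambda)\}}$. For each $\lambda \in D$, continuity of $h \mapsto x_h(\lambda)$ on $(0,1]$ combined with $\overline{\lim}_{h\to 0}\|x_h(\lambda)\| < \infty$ yields $\sup_{h \in (0,1]}\|x_h(\lambda)\| < \infty$, so $\{x_h(\lambda)\} \in X_b((0,1], X)$ and $f : D \to X_\infty$ is pointwise well defined. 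The difference-quotient calculation from the first half of the proof of Theorem \ref{4.5} shows $f$ is analytic, and the limit condition is exactly the one placing $\lambda_0$ in $r_{\dot{\{T_h\}}}(\dot{\{x\}})$.

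For the reverse inclusion, I would take $\lambda_0 \in r_{\dot{\{T_h\}}}(\dot{\{x\}})$, producing an analytic $f : V_{\lambda_0} \to X_\infty$ with $f(\lambda) = \dot{\{x_h(\lambda)\}}$ and $\lim_{h\to 0}\|(\lambda I - T_h)x_h(\lambda) - x\| \equiv 0$. As in the reciprocal direction of Theorem \ref{4.5}, analyticity of $f$ forces each coordinate map $\lambda \mapsto x_h(\lambda)$ to be analytic on $V_{\lambda_0}$, since existence of the difference quotient of $f$ in $X_\infty$ entails existence of the corresponding pointwise difference quotient in $X$ for every $h \in (0,1]$. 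Because $\{x_h(\lambda)\} \in X_b$ we have $\overline{\lim}_{h\to 0}\|x_h(\lambda)\| \le \sup_{h \in (0,1]}\|x_h(\lambda)\| < \infty$, so $\{x_h(\cdot)\} \in \mathcal{O}$, and Definition \ref{4.2} then places $\lambda_0$ in $r_{\{T_h\}}(x)$.

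The main delicate point I anticipate is the rigorous transfer of analyticity between the two settings, together with the reconciliation of the two boundedness conditions: finiteness of $\overline{\lim}_{h\to 0}\|x_h(\lambda)\|$ in the definition of $\mathcal{O}$ versus finiteness of $\sup_{h \in (0,1]}\|x_h(\lambda)\|$ required to form a class in $X_\infty = X_b/X_0$. The analyticity transfer is already encapsulated in the computation appearing in the proof of Theorem \ref{4.5}, while the matching of the two boundedness conditions follows from continuity of $h \mapsto x_h(\lambda)$ on $(0,1]$ together with compactness of $[h_0,1]$ for any $h_0 > 0$, which promotes control near $h=0$ to control over the full interval.
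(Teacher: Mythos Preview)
Your proposal is correct and follows essentially the same route as the paper: both inclusions are obtained by the analyticity transfer between families $\{x_h(\cdot)\}\in\mathcal{O}$ and maps $\lambda\mapsto\dot{\{x_h(\lambda)\}}$ into $X_\infty$, invoking exactly the difference-quotient computation from the proof of Theorem~\ref{4.5}. Your discussion of the reconciliation between the $\overline{\lim}_{h\to 0}$ and the $\sup_{h\in(0,1]}$ boundedness conditions is an additional point of care that the paper's proof simply omits.
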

\noindent

\begin{proof}
\noindent If ${\dot{\left\{T_h\right\}}}_{h\in \left.(0,1\right]}\in B_{\infty }$ has the single-valued extension property, then $\left\{T_h\right\}\in \dot{\left\{T_h\right\}}$ has the single-valued extension property (Theorem \ref{4.5}).

\noindent Let ${\lambda }_0\in r_{\left\{T_h\right\}}\left(x\right).$ Hence there are the analytic functions from $\mathcal{O}$ $\lambda \mapsto x_h\left(\lambda \right)$ defined on an open neighborhood of ${\lambda }_0$ $D\subset r_{\left\{T_h\right\}}\left(x\right)$ with values in\textit{ X, }$\forall h\in \left.(0,1\right]$, having property

\noindent 
\[\mathop{{\rm lim}}_{h\to 0}\left\|\left(\lambda I-T_h\right)x_h\left(\lambda \right)-x\right\|\equiv 0.\]

\noindent Similar to proof of Theorem \ref{4.5}, we prove that the application $f:D\to X_{\infty }$ defined by  $f\left(\lambda \right)=\dot{\left\{x_h(\lambda )\right\}}$ is analytical. Thus ${\lambda }_0\in r_{\dot{\left\{T_h\right\}}}\left(\dot{\left\{x\right\}}\right)$.

\noindent \textbf{Reciprocal: }Let 
\[{\lambda }_0\in r_{\dot{\left\{T_h\right\}}}\left(\dot{\left\{x\right\}}\right)=\{\left.{\lambda }_0\in {\mathbb C}\right|\exists \ {\rm an\ analytic\ function}\ \lambda \mapsto \dot{\left\{x_h(\lambda )\right\}}:V_{{\lambda }_0}\to X_{\infty },\]
\[{\mathop{\lim }_{h\to 0} \left\|\left(\lambda I-T_h\right)x_h\left(\lambda \right)-x\right\|\ }\equiv 0\ \}.\]

\noindent Analog proof of Theorem \ref{4.5}, we prove that the applications $\lambda \mapsto x_h(\lambda ):V_{{\lambda }_0}\to X$ are analytical, $\forall h\in \left.(0,1\right]$. Thus ${\lambda }_0\in r_{\left\{T_h\right\}}\left(x\right).$
\end{proof}
\noindent

\begin{remark}
\noindent Let $\left\{T_h\right\}\ \subset B(X)$  be a continuous bounded family of operators having the single-valued extension property and $x\in X$. Then

\noindent \textit{i) }$r\left(\left\{T_h\right\}\right)\subset r_{\left\{T_h\right\}}\left(x\right)$.

\noindent \textit{ii)  }$X_{\left\{T_h\right\}}\left(a\right)=X_{\left\{T_h\right\}}\left(Sp\left\{T_h\right\}\bigcap a\right)$, for each $a\subset {\mathbb C}{\rm .}$

\noindent \textit{iii) }Let ${\lambda }_0\in r_{\left\{T_h\right\}}\left(x\right)$ and the families of holomorphic function from $\mathcal{O}$ $\lambda \mapsto x_h\left(\lambda \right)$ and\textit{  }$\lambda \mapsto y_h\left(\lambda \right)$ defined on\textit{ D, }an open neighborhood of  ${\lambda }_0$, with values in\textit{ X, }for all $h\in \left.(0,1\right]$, having properties
\[\mathop{lim}_{h\to 0}\left\|\left(\lambda I-T_h\right)x_h\left(\lambda \right)-x\right\|=0\] 
and
\[\mathop{lim}_{h\to 0}\left\|\left(\lambda I-T_h\right)y_h\left(\lambda \right)-x\right\|=0, \] 
for each $\lambda \in D$\textit{. }Then
\[\mathop{lim}_{h\to 0}\left\|x_h\left(\lambda \right)-y_h\left(\lambda \right)\right\|=0,\] 
for each $\lambda \in D$\textit{.}

\noindent \textit{iv) }If $\left\{T_h\right\},\left\{S_h\right\}\in C_b\left(\left.(0,1\right],\ B\left(X\right)\right)$ are asymptotically equivalent, then 

\noindent
\[r_{\left\{T_h\right\}}\left(x\right)=r_{\left\{S_h\right\}}\left(x\right),\ \forall x\in X.\] 
\end{remark}

\begin{proof}
\noindent i) By Proposition \ref{4.6} we have

\noindent 
\[r_{\dot{\left\{T_h\right\}}}\left(\dot{\left\{x\right\}}\right)=r_{\left\{T_h\right\}}\left(x\right),  \forall x\in X.\]

\noindent Moreover, by Theorem \ref{3.12}, we know that

\noindent 
\[r\left(\dot{\left\{T_h\right\}}\right)=r\left(\left\{T_h\right\}\right).\]

\noindent Combing the above relations, we obtain

\noindent 
\[r\left(\left\{T_h\right\}\right)=r\left(\dot{\left\{T_h\right\}}\right)\subset r_{\dot{\left\{T_h\right\}}}\left(\dot{\left\{x\right\}}\right)=r_{\left\{T_h\right\}}\left(x\right),\ \forall x\in X.\]

\noindent ii) By i) it results 
\[{Sp}_{\left\{T_h\right\}}\left(x\right)\subset Sp\left(\left\{T_h\right\}\right).\]

\noindent Therefore $x\in \ X_{\left\{T_h\right\}}\left(a\right)$ if and only if 

\noindent 
\[{Sp}_{\left\{T_h\right\}}\left(x\right)\subset a\bigcap Sp\left(\left\{T_h\right\}\right),\] 
i.e. $x\in \ X_{\left\{T_h\right\}}\left(a\bigcap Sp\left(\left\{T_h\right\}\right)\right)$.

\noindent iii) By Definition \ref{4.2}., it results that the analytic functions  $\lambda \mapsto x_h\left(\lambda \right)$ are defined on an open neighborhood of ${\lambda }_0$  $D_1\subset r\left(\left\{T_h\right\}\right)$ with values in\textit{ X} and the analytic functions $\lambda \mapsto y_h\left(\lambda \right)$ are defined on an open neighborhood of ${\lambda }_0$  $D_2\subset r\left(\left\{T_h\right\}\right)$ on\textit{ X}. 

\noindent Let $D\subset D_1\bigcap D_2\subset r\left(\left\{T_h\right\}\right)$ be an open neighborhood of  ${\lambda }_0$.

\noindent Since 
\[\mathop{{\rm lim}}_{h\to 0}\left\|\left(\lambda I-T_h\right)x_h\left(\lambda \right)-x\right\|=0\] 
and
\[\mathop{{\rm lim}}_{h\to 0}\left\|\left(\lambda I-T_h\right)y_h\left(\lambda \right)-x\right\|=0, \] 
for each $\lambda \in D$, thus

\noindent 
\[\mathop{{\rm lim}}_{h\to 0}\left\|\left(\lambda I-T_h\right)x_h\left(\lambda \right)-\left(\lambda I-T_h\right)y_h\left(\lambda \right)\right\|=\mathop{{\rm lim}}_{h\to 0}\left\|\left(\lambda I-T_h\right)(x_h\left(\lambda \right)-y_h\left(\lambda \right))\right\|=0,\]

\noindent for each $\lambda \in D$\textit{.}

\noindent Having in view that the families of functions $ëë\mapsto x_h\left(\lambda \right)$ and $\lambda \mapsto y_h\left(\lambda \right)$ are analytical on D, hence the functions $\lambda \mapsto x_h\left(\lambda \right)-y_h\left(\lambda \right)$ are analytical. Since $\left\{T_h\right\}$ has the single-valued extension property, it follows that

\noindent 
\[\mathop{{\rm lim}}_{h\to 0}\left\|x_h\left(\lambda \right)-y_h\left(\lambda \right)\right\|=0,\] 
for all $\lambda \in D$\textit{.}

\noindent iv) Let ${\lambda }_0\in \ r_{\left\{T_h\right\}}\left(x\right).$ Then there is a family of functions $\left\{x_h\right\}$ from $\mathcal{O}$, with the property 

\noindent 
\[\mathop{{\rm lim}}_{h\to 0}\left\|\left(\lambda I-T_h\right)x_h\left(\lambda \right)-x\right\|\equiv 0.\] 
Thus 

\noindent 
\[\mathop{\overline{{\rm lim}}}_{h\to 0}\left\|\left(\lambda I-S_h\right)x_h\left(\lambda \right)-x\right\|=\mathop{\overline{{\rm lim}}}_{h\to 0}\left\|\left(\lambda I-S_h-T_h+T_h\right)x_h\left(\lambda \right)-x\right\|\le\]
\[\le \mathop{{\rm lim}}_{h\to 0}\left\|\left(\lambda I-T_h\right)x_h\left(\lambda \right)-x\right\|+\mathop{\overline{{\rm lim}}}_{h\to 0}\left\|\left(S_h-T_h\right)x_h\left(\lambda \right)\right\|\le\]
\[\le \mathop{{\rm lim}}_{h\to 0}\left\|S_h-T_h\right\|\mathop{\overline{{\rm lim}}}_{h\to 0}\left\|x_h\left(\lambda \right)\right\|.\]

\noindent Since $\left\{T_h\right\},\ \left\{S_h\right\}\ $ are asymptotically equivalent, by above relation it follows that

\noindent 
\[\mathop{{\rm lim}}_{h\to 0}\left\|\left(\lambda I-S_h\right)x_h\left(\lambda \right)-x\right\|\equiv 0.\] 
Therefore ${\lambda }_0\in \ r_{\left\{S_h\right\}}\left(x\right).$
\end{proof}
\noindent 

\begin{proposition}
\noindent Let $\left\{T_h\right\}\ \subset B(X)$  be a continuous bounded family of operators having the single-valued extension property. Then

\noindent i) For any $a\subset b$ we have $X_{\left\{T_h\right\}}\left(a\right)\subset X_{\left\{T_h\right\}}\left(b\right)$;

\noindent ii) $X_{\left\{T_h\right\}}\left(a\right)$ is a linear sub-space of\textit{ X, }$\forall a\subset {\mathbb C}$;

\noindent iii)  $\left\{\left.\dot{\left\{x\right\}}\in X_{\infty }\right|x\in X_{\left\{T_h\right\}}\left(a\right)\right\}=X^0_{\infty }\bigcap X_{\dot{\left\{T_h\right\}}}\left(a\right)$, $\forall a\subset {\mathbb C}$.
\end{proposition}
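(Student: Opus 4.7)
The plan is to dispatch the three parts in increasing order of difficulty, with parts (i) and (ii) being direct unpackings of Definition \ref{4.2} and part (iii) being an immediate consequence of Proposition \ref{4.6}.

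For (i), I would just note that by definition $x\in X_{\{T_h\}}(a)$ means ${Sp}_{\{T_h\}}(x)\subset a$. If $a\subset b$ then ${Sp}_{\{T_h\}}(x)\subset a\subset b$, which places $x$ in $X_{\{T_h\}}(b)$. No computation is needed.

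For (ii), the key is to show that $r_{\{T_h\}}(x)\cap r_{\{T_h\}}(y)\subset r_{\{T_h\}}(\alpha x+\beta y)$ for all $\alpha,\beta\in\mathbb{C}$ and $x,y\in X$; equivalently, ${Sp}_{\{T_h\}}(\alpha x+\beta y)\subset {Sp}_{\{T_h\}}(x)\cup {Sp}_{\{T_h\}}(y)$. I would take $\lambda_0\in r_{\{T_h\}}(x)\cap r_{\{T_h\}}(y)$, pick the two families of analytic extensions $\{x_h(\lambda)\},\{y_h(\lambda)\}\in\mathcal{O}$ on a common open neighborhood $D$ of $\lambda_0$ (shrink to the intersection of the two neighborhoods), and set $z_h(\lambda)=\alpha x_h(\lambda)+\beta y_h(\lambda)$. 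The family $\{z_h\}$ is again in $\mathcal{O}$ (boundedness in $\overline{\lim}$ is preserved by finite linear combinations), and
\[
(\lambda I-T_h)z_h(\lambda)-(\alpha x+\beta y)=\alpha\bigl[(\lambda I-T_h)x_h(\lambda)-x\bigr]+\beta\bigl[(\lambda I-T_h)y_h(\lambda)-y\bigr],
\]
so the triangle inequality gives $\lim_{h\to 0}\|(\lambda I-T_h)z_h(\lambda)-(\alpha x+\beta y)\|\equiv 0$ on $D$. Hence $\lambda_0\in r_{\{T_h\}}(\alpha x+\beta y)$. If $x,y\in X_{\{T_h\}}(a)$, then $\mathbb{C}\setminus a\subset r_{\{T_h\}}(x)\cap r_{\{T_h\}}(y)\subset r_{\{T_h\}}(\alpha x+\beta y)$, so $\alpha x+\beta y\in X_{\{T_h\}}(a)$.

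For (iii), I would chain the equivalences via Proposition \ref{4.6}. For any $x\in X$, Proposition \ref{4.6} gives $r_{\{T_h\}}(x)=r_{\dot{\{T_h\}}}(\dot{\{x\}})$, so ${Sp}_{\{T_h\}}(x)={Sp}_{\dot{\{T_h\}}}(\dot{\{x\}})$. Hence $x\in X_{\{T_h\}}(a)$ is equivalent to $\dot{\{x\}}\in X_{\dot{\{T_h\}}}(a)$; combined with the tautology $\dot{\{x\}}\in X^0_\infty$ (since $x\in X$), this proves the inclusion $\subset$. For $\supset$, any element of $X^0_\infty\cap X_{\dot{\{T_h\}}}(a)$ is by definition of $X^0_\infty$ of the form $\dot{\{x\}}$ for some $x\in X$, and ${Sp}_{\dot{\{T_h\}}}(\dot{\{x\}})\subset a$ then translates back via Proposition \ref{4.6} to ${Sp}_{\{T_h\}}(x)\subset a$, placing $x\in X_{\{T_h\}}(a)$. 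The only delicate point is confirming that the representative $x\in X$ chosen in $\dot{\{x\}}$ is well-defined in the sense of belonging to $X^0_\infty$, which follows from the very definition of $X^0_\infty$ given just before the proposition. No step presents a real obstacle: (i) is definitional, (ii) uses only linearity of the extension equation, and (iii) is a direct transcription through Proposition \ref{4.6}.
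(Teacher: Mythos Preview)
Your proof is correct and follows essentially the same approach as the paper: part (i) is the same definitional unpacking, part (ii) constructs the same linear combination $z_h(\lambda)=\alpha x_h(\lambda)+\beta y_h(\lambda)$ and applies the triangle inequality to obtain $Sp_{\{T_h\}}(\alpha x+\beta y)\subset Sp_{\{T_h\}}(x)\cup Sp_{\{T_h\}}(y)$, and part (iii) is the same direct transcription through Proposition~\ref{4.6}. Your treatment of part (iii) is slightly more explicit in separating the two inclusions, but the content is identical.
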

\noindent

\begin{proof}
\noindent i) Let $a,b\subset {\mathbb C}$ such that $a\subset b$ and $x\in X_{\left\{T_h\right\}}\left(a\right)$. Then ${Sp}_{\left\{T_h\right\}}\left(x\right)\subset a$, and thus ${Sp}_{\left\{T_h\right\}}\left(x\right)\subset b$. Therefore $x\in X_{\left\{T_h\right\}}\left(b\right)$.

\noindent ii) Let $x,y\in X_{\left\{T_h\right\}}\left(a\right)$ and $\alpha ,\beta \in {\mathbb C}$. In addition, for any ${\lambda }_0\in r_{\left\{T_h\right\}}\left(x\right)\bigcap r_{\left\{T_h\right\}}\left(y\right)$ there are the analytic functions families $\left\{x_h\right\}\ $ and $\left\{y_h\right\}$ defined on an open neighborhood \textit{D} of ${\lambda }_0$ such that
\[\mathop{{\rm lim}}_{h\to 0}\left\|\left(\lambda I-T_h\right)x_h\left(\lambda \right)-x\right\|=0\] 
and
\[\mathop{{\rm lim}}_{h\to 0}\left\|\left(\lambda I-T_h\right)y_h\left(\lambda \right)-y\right\|=0,\] 
for each $\lambda \in D$.

\noindent Let $z_h\left(\lambda \right)=\alpha x_h\left(\lambda \right)+\beta y_h\left(\lambda \right)$, for any $\lambda \in D$ and $h\in \left.(0,1\right].$ Since $\left\{x_h\right\}\ $ and $\left\{y_h\right\}$ are analytic functions families on $D$, it follows that $\left\{z_h\right\}$ is also an analytic functions family on $D$ and more 

\noindent 
\[\mathop{{\rm lim}}_{h\to 0}\left\|\left(\lambda I-T_h\right)z_h\left(\lambda \right)-(\alpha x+\beta y)\right\|\le\]
\[\le \left|\alpha \right|\mathop{{\rm lim}}_{h\to 0}\left\|\left(\lambda I-T_h\right)x_h\left(\lambda \right)-x\right\|+\left|\beta \right|\mathop{{\rm lim}}_{h\to 0}\left\|\left(\lambda I-T_h\right)y_h\left(\lambda \right)-y\right\|=0,\] 
for each $\lambda \in D$.

\noindent Therefor ${\lambda }_0\in r_{\left\{T_h\right\}}\left(\alpha x+\beta y\right)$ and

\noindent 
\[r_{\left\{T_h\right\}}\left(x\right)\bigcap r_{\left\{T_h\right\}}\left(y\right)\subset r_{\left\{T_h\right\}}\left(\alpha x+\beta y\right).\] 
Moreover 
\[{Sp}_{\left\{T_h\right\}}\left(\alpha x+\beta y\right)\subset {Sp}_{\left\{T_h\right\}}\left(x\right)\bigcup {Sp}_{\left\{T_h\right\}}\left(y\right).\]

\noindent Since $x,y\in X_{\left\{T_h\right\}}\left(a\right)$, i.e. ${Sp}_{\left\{T_h\right\}}\left(x\right)\subset a$ and ${Sp}_{\left\{T_h\right\}}\left(y\right)\subset a$, by above relation, it follows that
\[{Sp}_{\left\{T_h\right\}}\left(\alpha x+\beta y\right)\subset a,\] 
hence $\alpha x+\beta y\in X_{\left\{T_h\right\}}\left(a\right)$.

\noindent iii) Since by Proposition \ref{4.6} we have ($r_{\left\{T_h\right\}}\left(x\right)=r_{\dot{\left\{T_h\right\}}}\left(\dot{\left\{x\right\}}\right)$), it follows that ${x\in X}_{\left\{T_h\right\}}\left(a\right)$ if and only if\textit{  }${\dot{\left\{x\right\}}\in X}_{\dot{\left\{T_h\right\}}}\left(a\right)$. Hence 

\noindent 
\[\left\{\left.\dot{\left\{x\right\}}\in X_{\infty }\right|x\in {X}_{\left\{T_h\right\}}\left(a\right)\right\}{\rm =}\left\{\left.\dot{\left\{x\right\}}\in X_{\infty }\right|{Sp}_{\left\{T_h\right\}}\left(x\right)\subset a\right\}=\]
\[=\left\{\left.\dot{\left\{x\right\}}\in X_{\infty }\right|{Sp}_{\dot{\left\{T_h\right\}}}\left(\dot{\left\{x\right\}}\right)\subset a\right\}=X^0_{\infty }\bigcap X_{\dot{\left\{T_h\right\}}}\left(a\right).\] 
\end{proof}

\begin{theorem}
\label{4.9}
\noindent Let $\left\{S_h\right\},\ \left\{T_h\right\}\ \subset B(X)$ be two continuous bounded families of operators having the single-valued extension property, such that $\mathop{lim}_{h\to 0}\left\|T_hS_h-S_hT_h\right\|=0$\textit{. }If $\left\{S_h\right\},\ \left\{T_h\right\}$ are asymptotically spectral equivalent, then
\noindent 

\[{Sp}_{\left\{T_h\right\}}\left(x\right)={Sp}_{\left\{S_h\right\}}\left(x\right),\ \forall x\in X.\] 
\end{theorem}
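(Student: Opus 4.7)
The plan is to lift the entire statement to the quotient Banach algebra $B_{\infty}$ acting on the Banach space $X_{\infty}$, apply the classical analogue (Theorem 1) there, and then descend back to $X$ via Proposition \ref{4.6}. All three hypotheses of the theorem are tailor-made to translate cleanly into statements about $\dot{\{T_h\}}$ and $\dot{\{S_h\}}$ in $B_{\infty}$.

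First I would translate each hypothesis. The asymptotic commutativity $\lim_{h\to 0}\|T_hS_h-S_hT_h\|=0$ yields $\dot{\{T_h\}}\dot{\{S_h\}}=\dot{\{S_h\}}\dot{\{T_h\}}$ by Proposition \ref{2.8}. The single-valued extension property for $\{T_h\}$ and $\{S_h\}$ passes to $\dot{\{T_h\}}$ and $\dot{\{S_h\}}$ (viewed as elements of $B(X_{\infty})$) by Theorem \ref{4.5}.

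The key remaining step is to upgrade asymptotic spectral equivalence of $\{T_h\},\{S_h\}$ to spectral equivalence of $\dot{\{T_h\}},\dot{\{S_h\}}$ in the sense of Definition \ref{2.6}. Using $\dot{\{A_hB_h\}}=\dot{\{A_h\}}\dot{\{B_h\}}$, one has $(\dot{\{S_h\}}-\dot{\{T_h\}})^{[n]}=\dot{\{(S_h-T_h)^{[n]}\}}$, so the problem reduces to comparing $\limsup_{h\to 0}\|(S_h-T_h)^{[n]}\|$ with $\|\dot{\{(S_h-T_h)^{[n]}\}}\|$. The inequality $\limsup_{h\to 0}\|A_h\|\le\|\dot{\{A_h\}}\|$ is already recorded in the excerpt. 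For the reverse, I would use a continuous cutoff argument: given $\epsilon>0$, pick $h_1$ with $\|A_h\|<\limsup+\epsilon$ for $h\le h_1$, choose a continuous $\phi:(0,1]\to[0,1]$ that vanishes near $0$ and equals $1$ on $[h_1,1]$, and set $U_h=-\phi(h)A_h\in C_0((0,1],B(X))$. Then $\sup_h\|A_h+U_h\|\le\limsup_{h\to 0}\|A_h\|+\epsilon$, giving $\|\dot{\{A_h\}}\|=\limsup_{h\to 0}\|A_h\|$; the desired spectral equivalence in $B_{\infty}$ follows at once.

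With these three translations in hand, $\dot{\{T_h\}}$ and $\dot{\{S_h\}}$ are two commuting, quasinilpotent equivalent operators on the Banach space $X_{\infty}$, both possessing the single-valued extension property. The classical Theorem 1 then delivers $Sp_{\dot{\{T_h\}}}(\dot{y})=Sp_{\dot{\{S_h\}}}(\dot{y})$ for every $\dot{y}\in X_{\infty}$. Specializing to $\dot{y}=\dot{\{x\}}\in X_{\infty}^0$ for arbitrary $x\in X$ and applying Proposition \ref{4.6} to both families yields $Sp_{\{T_h\}}(x)=Sp_{\dot{\{T_h\}}}(\dot{\{x\}})=Sp_{\dot{\{S_h\}}}(\dot{\{x\}})=Sp_{\{S_h\}}(x)$, as required. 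The main obstacle is precisely the norm comparison in the previous paragraph: the rest is a bookkeeping exercise that invokes results already at hand, but verifying that asymptotic spectral equivalence is exactly the right formulation to produce quasinilpotent equivalence inside the quotient algebra is where the substantive work lies.
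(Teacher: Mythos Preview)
Your proposal is correct and follows exactly the paper's strategy: lift the hypotheses to $B_{\infty}$ acting on $X_{\infty}$, invoke the classical Theorem~1 there, and descend via Proposition~\ref{4.6}. Your cutoff argument establishing $\|\dot{\{A_h\}}\|=\limsup_{h\to 0}\|A_h\|$ in fact supplies a justification the paper glosses over---its proof cites Proposition~\ref{2.7} to pass from asymptotic spectral equivalence of $\{S_h\},\{T_h\}$ to spectral equivalence of $\dot{\{S_h\}},\dot{\{T_h\}}$, but Proposition~\ref{2.7} as stated gives only the reverse implication.
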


\begin{proof}
\noindent  Since  $\left\{S_h\right\},\ \left\{T_h\right\}$ have the single-valued extension property, by Theorem \ref{4.5} it results that ${\dot{\left\{T_h\right\}}}_{h\in \left.(0,1\right]},\ {\dot{\left\{S_h\right\}}}_{h\in \left.(0,1\right]}\in B_{\infty }$ have the single-valued extension property.

\noindent If $\left\{S_h\right\},\ \left\{T_h\right\}$ are asymptotically spectral equivalent, by Proposition \ref{2.7} have that ${\dot{\left\{T_h\right\}}}_{h\in \left.(0,1\right]},\ {\dot{\left\{S_h\right\}}}_{h\in \left.(0,1\right]}$ are spectral equivalent. Moreover, we obtain that for any ${\dot{\left\{T_h\right\}}}_{h\in \left.(0,1\right]},\ {\dot{\left\{S_h\right\}}}_{h\in \left.(0,1\right]}\in B_{\infty }$ have the single-valued extension property and being spectral equivalent, it follows that 

\noindent 

\[{Sp}_{\dot{\left\{T_h\right\}}}\left(\dot{\left\{x\right\}}\right)={Sp}_{\dot{\left\{S_h\right\}}}\left(\dot{\left\{x\right\}}\right),\] for any $x\in X$.

\noindent 

\noindent Therefore, applying  Proposition \ref{4.6}, we have

\noindent
 
\[{Sp}_{\left\{T_h\right\}}\left(x\right)={Sp}_{\dot{\left\{T_h\right\}}}\left(\dot{\left\{x\right\}}\right)={Sp}_{\dot{\left\{S_h\right\}}}\left(\dot{\left\{x\right\}}\right)={Sp}_{\left\{S_h\right\}}\left(x\right),\forall x\in X.\] 
\end{proof}

\begin{remark}
\noindent Let $\left\{S_h\right\},\ \left\{T_h\right\}\ \subset B(X)$ be two continuous bounded families of operators having the single-valued extension property, such that $\mathop{lim}_{h\to 0}\left\|T_hS_h-S_hT_h\right\|=0$. If $\left\{S_h\right\},\ \left\{T_h\right\}$ are asymptotically spectral equivalent, then
\noindent 

\[X_{\left\{T_h\right\}}\left(a\right)=X_{\left\{S_h\right\}}\left(a\right),\] 
for any $a\subset {\mathbb C}$.
\end{remark}
\noindent 

\begin{proof}
\noindent Since $\left\{S_h\right\},\ \left\{T_h\right\}$ are asymptotically spectral equivalent, by Theorem \ref{4.9}, it follows that ${Sp}_{\left\{T_h\right\}}\left(x\right)={Sp}_{\left\{S_h\right\}}\left(x\right)$, for all $x\in X$. Then, for any  $x\in X_{\left\{T_h\right\}}\left(a\right)$, i.e. ${Sp}_{\left\{T_h\right\}}\left(x\right)\subset a$, it results that $x\in X_{\left\{S_h\right\}}\left(a\right)$, thus
\[X_{\left\{T_h\right\}}\left(a\right)\subseteq X_{\left\{S_h\right\}}\left(a\right).\]

\noindent Analog, we can show that $X_{\left\{S_h\right\}}\left(a\right)\subseteq X_{\left\{T_h\right\}}\left(a\right)$.
\end{proof}
\noindent 

\noindent 

\noindent 

\noindent Bibliography 

\noindent 

\begin{enumerate}
\item  David Albrecht, Xuan Duong and Alan McIntosh -- \textit{Operator Theory and Harmonic Analysis}, Lecture presented at the Workshop in Analysis and geometry, A.N.U, Canberra, Jan. -- Feb. 1995. 

\item  I. Colojoarã, \textit{Elemente de teorie spectralã}, Editura Academiei, 1968.

\item   I. Colojoarã and C. Foias, \textit{Theory of Generalized Spectral Operators,} Gordon and Breanch Science Publisher, 1968.

\item  K. B. Laursen and M. M. Neuman, \textit{An Introduction on Local Spectral Theory,} Calerdon Press, Oxford, 2000.

\item  M. Sabac, \textit{Teorie spectralã elementarã cu exercitii si probleme selectate si prezentate de Daniel Beltitã}, Biblioteca Societãtii de Stiinte Matematice din Romania.

\item  S. Macovei, \textit{Spectrum of a Family of Operators}, arXiv:1207.2327.
\end{enumerate}

\noindent

\end{document}